\newcommand{\cf}{\operatorname{cf}}
\newcommand{\Aee }{\mathcal A}
\newcommand{\Ree }{\mathcal R}
\newcommand{\Bee }{\mathcal B}
\newcommand{\Iee }{\mathcal I}
\newcommand{\Cee }{\mathcal C}
\newcommand{\Hee }{\mathcal H}
\newcommand{\Wee }{\mathcal W}
\newcommand{\Qee }{\mathcal Q}
\newcommand{\See }{\mathcal S}
\newcommand{\Pee }{\mathcal P}
\newcommand{\Fee }{\mathcal F}
\renewcommand{\int}{\operatorname{Int}}
\newtheorem{theorem}{Theorem}
\newtheorem{corollary}[theorem]{Corollary}
\newtheorem{lemma}[theorem]{Lemma}
\newtheorem{fact}[theorem]{Fact}
\author{Piotr Kalemba}
\address{Piotr Kalemba \\
 Institute of Mathematics, University of
Silesia \\
ul. Bankowa 14, 40-007 Katowice}
\email{pkalemba@math.us.edu.pl}
\author{Szymon Plewik}
\address{Szymon Plewik\\Institute of Mathematics,
University of Silesia, ul. Ban\-ko\-wa 14, 40-007 Katowice}
\email{plewik@math.us.edu.pl}
\author{Anna Wojciechowska}
\address{Anna Wojciechowska\\Institute of Mathematics,
University of Silesia, ul. Ban\-ko\-wa 14, 40-007 Katowice}
\email{annawojciechowska@math.us.edu.pl}
\begin{document}

\title{On the ideal $(v^0)$}
\subjclass[2000]{Primary: 03E35, 28A05; Secondary: 03E50, 26A03, 54A10}
\keywords{Base $v$-matrix; Doughnut; Ideal type; Ideal $(v^0)$    }
\date{}
\begin{abstract}
  The $\sigma$-ideal $(v^0)$ is associated with the Silver forcing, see   \cite{bre}. Also,  it constitutes the family of all completely doughnut null sets, see \cite{hal}. We  introduce segment topologies to state some resemblances of $(v^0)$ to the family of  Ramsey null sets.  To describe $add(v^0)$ we  adopt a proof of Base Matrix Lemma. Consistent results are stated, too.   Halbeisen's conjecture $cov(v^0) = add(v^0)$ is confirmed under the hypothesis  $t=  \min \{\cf (\frak c ),  r\} $. The hypothesis $cov(v^0)=\omega_1$ implies that $(v^0)$ has the ideal type $(\frak c, \omega_1,\frak c)$.
\end{abstract}

\maketitle
\section{Introduction}
Our discussion  focuses around the family $[\omega]^\omega$ of all infinite subsets of natural numbers. We are interested in some  structures on $[\omega]^\omega$ which correspond to the inclusion $\subseteq$ and to the partial order $\subseteq^*$.  Recall that, $A \subseteq^* X$ means that the set $A \setminus X$ is finite. We assume that the readers are familiar with some properties of the partial order $([\omega]^\omega, \subseteq^*)$. For instance,   
gaps of type $(\omega, \omega^*)$ and  $\omega$-limits   do not exist, see F. Hausdorff \cite{hau} or compare F. Rothberger \cite{rot}.  We refer to books \cite{eng} and \cite{ker} for the mathematics used in this note. In particular, one can find basic facts about completely Ramsey sets and its applications to  the descriptive set theory  in \cite{ker} p. 129 - 136. Let us add, that E. Ellentuck (1974) was not the first one who considered properties of the  topology which is called by his name. Non normality of this topology was established by V. M. Ivanowa (1955) and J. Keesling (1970), compare \cite{eng} p. 162 -163. We refer the readers to papers \cite{bs}, \cite{bre}, \cite{ips}, \cite{lou}, \cite{ls} and \cite{nr} for other applications of completely Ramsey sets, not discussed in \cite{ker}. 

  Let $\Wee$ be a family of sets such that $\cup \Wee \notin \Wee$. Recall that,   $$add (\Wee) = \min \{ |\Fee|: \Fee \subseteq \Wee \mbox{ and } 
\cup \Fee \notin \Wee\}$$ is called the \textit{additivity number} of $\Wee$. But   $$cov (\Wee) = \min \{ |\Fee|: \Fee \subseteq \Wee \mbox{ and } 
\cup \Fee = \cup \Wee\} $$ is called the \textit{covering number} of $\Wee$.   
 Thus, $add(v^0)$ and $add(v)$ denote the additivity number of the ideal $(v^0)$ and     of the $\sigma$-field $(v)$, respectively. But $cov(v^0)$ denotes the covering the ideal $(v^0)$. 
 For definitions of the tower number $t$ and the reaping number $r$ we refer to \cite{bla}. One can find there  a thorough discussion of  consistent properties of $t$ and $r$, too.

J. Brendle \cite{bre} considered a few tree-like forcings with  $\sigma$-ideals associated to them. The concept of these ideals is modeled on $s^0$-sets of Marczewski \cite{mar} and Morgan's category base \cite{mor}. One of these ideals is the ideal $(v^0)$. It is associated with the Silver forcing. The ideal $(v^0)$ is examined in papers \cite{bhl}, \cite{hal} and \cite{knw}, too.  L. Halbeisen \cite{hal} found some analogy with completely Ramsey sets and introduced so called completely doughnut sets, i.e. $v$-sets in our terminology. He introduced a pseudo topology - and called it the doughnut topology -  such that $X$ is a $v$-set iff $X$ has the Baire property with respect to the doughnut topology.  Using the method of B. Aniszczyk \cite{ani} and K. Schilling \cite{sch} we introduce segments topologies, each one corresponds to $v$-sets similarly as  Halbeisen's pseudo topology. 
To describe $add(v)$ we  adopt a proof of Base Matrix Lemma, compare \cite{bps} and \cite{bs}.  The height $\kappa(v)$ of a base $v$-matrix equals to $add(v)=add(v^0)$. With a base $v$-matrix it is associated  the increasing family of $v^0$-sets with the union outside the ideal $(v^0)$. We can not confirm (in ZFC) that this union is $[\omega]^\omega$. Therefore, we get a few consistent results. For example,  $cov (v^0)=\omega_1$ implies that $(v^0)$ has the ideal type $(\frak c, \omega_1,\frak c)$. The conjecture of Halbeisen $cov(v^0) = add(v^0)$ is confirmed under $t=  \min \{\cf (\frak c ),  r\} $. 

On the other hand, each maximal chain contained in a base $v$-matrix gives a $(\kappa(v),\kappa(v)^*)$-gap or a $\kappa(v)$-limit. If $cov(v^0) = add(v^0)$, then one can improve any  base $v$-matrix such that each maximal chain,  contained in a new one,   gives a $(\kappa(v),\kappa(v)^*)$-gap, only. But,  whenever $cov(v^0) \not= add(v^0)$, then there exist  $\kappa(v)$-limits. Thus our's research   continue Hausdorff \cite{hau} and Rothberger \cite{rot}, too.

\section{Segments and $*$-segments}
In this section we consider segments and $*$-segments. The facts quoted here immediately  arise from well known ones.
A set 
$$<A,B> = \{ X \in [\omega]^\omega :
A \subseteq X \subseteq B \}$$ 
is called a \textit{segment}, whenever   $A \subseteq B\subseteq \omega$ and   $B\setminus \ A \in  [\omega]^\omega$. 
 By the definition any segment has the cardinality continuum. If \mbox{$<A,B>$} and $<C,D>$ are segments, then the intersection $$<A,B> \cap <C,D>= <A\cup C, B \cap D >$$ is finite or is a segment. It is a segment, whenever $A\cup C \subset  B \cap D $ and $ B \cap D \setminus A\cup C \in[\omega]^\omega$. Thus,  the family of all segments is not closed under finite intersections. 
\begin{fact}\label{a1} Any segment contains continuum many disjoint segments.\end{fact}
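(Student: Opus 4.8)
The plan is to code the $\frak c$ branches of the Cantor space $2^\omega$ into membership patterns on part of the gap $B\setminus A$, while keeping a second infinite part of the gap in reserve so that every code yields a genuine segment. Concretely, first I would use the hypothesis that $B\setminus A\in[\omega]^\omega$ to split the gap into two disjoint infinite pieces, say $B\setminus A=F\cup G$ with $F\cap G=\emptyset$, $F$ infinite, and $G=\{g_0<g_1<g_2<\dots\}$ infinite. The set $F$ will play the role of the reserve that forces each constructed piece to have an infinite gap, and the indices of $G$ will carry the coding.

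Next, for each $r\in 2^\omega$ I would set $C_r=A\cup\{g_n : r(n)=1\}$ and $D_r=C_r\cup F$. Then $A\subseteq C_r\subseteq D_r\subseteq B$, and $D_r\setminus C_r=F$ is infinite, so $<C_r,D_r>$ is a segment and it is contained in $<A,B>$ by construction. Thus we obtain a family of segments indexed by $2^\omega$, which is a family of size $\frak c$; it remains only to check that these segments are pairwise disjoint, after which distinctness of the segments follows automatically from disjointness.

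For disjointness I would argue as follows. If $r\neq r'$, choose $n$ with $r(n)\neq r'(n)$, say $r(n)=1$ and $r'(n)=0$. Then $g_n\in C_r$, so $g_n$ belongs to every member of $<C_r,D_r>$; on the other hand $g_n\notin A$, $g_n\notin F$ (as $g_n\in G$ and $F\cap G=\emptyset$), and $g_n\notin\{g_m : r'(m)=1\}$ (as $r'(n)=0$ and the $g_m$ are distinct), so $g_n\notin D_{r'}$ and hence $g_n$ lies in no member of $<C_{r'},D_{r'}>$. Therefore no $X$ can belong to both segments, i.e.\ $<C_r,D_r>\cap<C_{r'},D_{r'}>=\emptyset$.

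The only point demanding care is precisely the one flagged just before the statement: the intersection of two segments is in general finite \emph{or} again a segment, so mere inequality of the codes does not by itself preclude a nonempty (even infinite) overlap. The forcing/forbidding of the single coordinate $g_n$ is what upgrades ``finite intersection'' to ``empty intersection'', since it makes $C_r\cup C_{r'}\not\subseteq D_r\cap D_{r'}$. I do not expect a genuine obstacle beyond bookkeeping; the construction is a direct independent-coding argument, and the reserve set $F$ is the device that simultaneously guarantees each piece is a bona fide segment of cardinality continuum.
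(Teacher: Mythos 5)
Your proof is correct, but it takes a different route from the paper's. The paper starts from an almost disjoint family $\Ree$ of size $\mathfrak{c}$ consisting of subsets of $B\setminus A$, splits each $C\in\Ree$ into two infinite halves $D_C$ and $C\setminus D_C$, and takes the segments $<A\cup D_C, A\cup C>$; disjointness there comes from almost disjointness (if $X$ lay in two such segments, the infinite set $D_{C'}$ would be forced inside the finite intersection $C\cap C'$). Your family is not almost disjoint at all --- every $D_r$ contains the common infinite reserve $F$ --- and disjointness is instead enforced coordinatewise by a single point $g_n$ that is required in one segment and forbidden in the other. What your version buys is self-containedness: you do not need to invoke the (standard but nontrivial) existence of an almost disjoint family of size continuum, since the coding by $2^\omega$ does that work directly; it also produces ``parallel'' segments all sharing the same gap $F$, which is a cleaner picture. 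What the paper's version buys is brevity, by outsourcing the cardinality-$\mathfrak{c}$ combinatorics to a known fact. One small point worth keeping explicit in your write-up, which you do handle correctly: $C_r$ may be a finite set (e.g.\ when $A$ is finite and $r$ has finitely many ones), but this is harmless because the definition of a segment only requires the lower endpoint to be contained in the upper one with infinite difference, and $D_r\setminus C_r=F$ is infinite.
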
 \begin{proof} Let $<A,B>$ be a segment. Consider a family $\Ree$ of almost disjoint subsets of $B\setminus A$ of the cardinality continuum.   Divide each set $C \in \Ree$ into two infinite subsets $D_C$ and $C\setminus D_C$.  The family  $$\{ <A\cup D_C, A \cup C>: C\in \Ree \}$$ is a desired one. 
\end{proof}

For any set $S \subseteq [\omega]^\omega$ we put  $$S^*= \{Y: X\subseteq^* Y  \subseteq^* X \mbox{ and } X \in S\}.$$ Thus, $S^*$ is a countable union of copies of $S$, i.e. the union of sets $\{(X\setminus y) \cup (y\setminus X): X\in S\}$, where $y\subset \omega$ runs over finite subsets.  If $<A,B>$ is a segment, then the set      $$ \{ X:
A \subseteq^* X \subseteq^* B \}= <A,B>^*$$ is called  $*$-\textit{segment}.

\begin{fact}\label{BBB} If $\{ < A_n, B_n>: n \in \omega \}$ is a   sequence of segments decreasing with respect to the inclusion, then there exists a segment $<C,D>$ such that $< C, D>\subseteq < A_n, B_n>^*$ for each $n\in \omega$.
\end{fact}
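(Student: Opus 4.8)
The plan is to first unwind the hypothesis that the segments decrease. Writing $\langle A_{n+1},B_{n+1}\rangle\subseteq\langle A_n,B_n\rangle$ and testing this inclusion on suitable witnesses inside the smaller segment (which exist because $B_{n+1}\setminus A_{n+1}$ is infinite, so one may prescribe or omit any single point) I would show that it forces the two genuine inclusions $A_n\subseteq A_{n+1}$ and $B_{n+1}\subseteq B_n$. Combined with $A_n\subseteq B_n$, monotonicity then yields $A_k\subseteq A_{\max(k,n)}\subseteq B_{\max(k,n)}\subseteq B_n$ for all $k,n$; in particular, setting $A_\infty=\bigcup_k A_k$, one gets the genuine inclusion $A_\infty\subseteq B_n$ for every $n$. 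This is the observation that makes the lower half of the desired $\ast$-interpolation essentially free.

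The second ingredient is an infinite set $T$ lying almost inside every $B_n\setminus A_n$. The sets $S_n=B_n\setminus A_n$ are infinite and, by the inclusions above, decrease with $n$, so a routine diagonalization---choosing $t_0<t_1<\cdots$ with $t_n\in S_n$ and putting $T=\{t_n:n\in\omega\}$---produces an infinite $T$ with $T\subseteq^* S_n$ for all $n$, since $t_k\in S_k\subseteq S_n$ for $k\geq n$. From $T\subseteq^* S_n$ I read off the two facts I need: $T\subseteq^* B_n$ and $T\cap A_n$ is finite for each $n$.

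With these in hand I would define $C=A_\infty\setminus T$ and $D=A_\infty\cup T$. Since $C\subseteq A_\infty\subseteq D$ and $D\setminus C=T$ is infinite, $\langle C,D\rangle$ is a genuine segment. For the required containment I check the two halves separately: $A_n\setminus C=A_n\cap T$ is finite, so $A_n\subseteq^* C$; and $D\setminus B_n=(A_\infty\cup T)\setminus B_n=T\setminus B_n$ is finite, using $A_\infty\subseteq B_n$, so $D\subseteq^* B_n$. Then for any $X\in\langle C,D\rangle$ we have $A_n\subseteq^* C\subseteq X\subseteq D\subseteq^* B_n$, hence $A_n\subseteq^* X\subseteq^* B_n$, that is $X\in\langle A_n,B_n\rangle^*$; this gives $\langle C,D\rangle\subseteq\langle A_n,B_n\rangle^*$ for every $n$, as required.

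The hard part is guaranteeing that $D\setminus C$ stays infinite: the naive choice $C=A_\infty$, $D=\bigcap_n B_n$ can fail badly, since $(\bigcap_n B_n)\setminus A_\infty=\bigcap_n(B_n\setminus A_n)$ may be finite or empty even though each $B_n\setminus A_n$ is infinite. The pseudo-intersection $T$ of the decreasing family $(B_n\setminus A_n)$ is exactly what repairs this, and carving $T$ out of $A_\infty$ on the lower end while adjoining it on the upper end is what simultaneously keeps $A_n\subseteq^* C$ and $D\subseteq^* B_n$ while forcing the gap $D\setminus C=T$ to remain infinite. I do not expect to need Hausdorff's theorem on the nonexistence of $(\omega,\omega^*)$-gaps, because the genuine, not merely mod-finite, inclusion $A_\infty\subseteq B_n$ trivializes the lower-side interpolation that ordinarily requires it.
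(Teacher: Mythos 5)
Your proof is correct. It reaches the same kind of object as the paper does --- a segment $<C,D>$ with $A_n\subseteq^* C$, $D\subseteq^* B_n$ and $D\setminus C$ infinite --- but by a more self-contained route. The paper first observes the genuine inclusions $A_0\subseteq A_1\subseteq\cdots\subseteq B_1\subseteq B_0$ and then \emph{cites} the Hausdorff--Rothberger theorem (no $\omega$-limits, no $(\omega,\omega^*)$-gaps) to produce a $C$ with $A_n\subseteq^* C\subseteq^* B_n$ and both differences infinite, after which it asserts the existence of a suitable $D\supseteq C$; that last step itself hides a pseudo-intersection argument. You instead exploit the genuine inclusions more aggressively: they give $A_\infty=\bigcup_n A_n\subseteq B_n$ outright, so the only thing left to manufacture is an infinite set $T$ with $T\subseteq^* B_n\setminus A_n$ for all $n$, which your diagonalization $t_n\in S_n$ supplies directly since the sets $S_n=B_n\setminus A_n$ genuinely decrease. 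The choice $C=A_\infty\setminus T$, $D=A_\infty\cup T$ is the right one, and your remark about why the naive $C=A_\infty$, $D=\bigcap_n B_n$ fails identifies the actual content of the lemma. What your version buys is the elimination of the appeal to the gap/limit theorem in favor of an explicit $\omega$-step diagonal construction, which makes the proof elementary and verifiable line by line; what the paper's version buys is brevity and a pointer to the general principle that the authors want to emphasize (the lemma is advertised as an amplification of the nonexistence of $(\omega,\omega^*)$-gaps). Both are valid; yours is arguably the cleaner writeup of the same underlying construction.
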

\begin{proof} Let $\{ < A_n, B_n>: n \in \omega \}$ be a  decreasing sequence of segments. We have $$A_0 \subseteq A_1 \subseteq A_2 \subseteq \ldots  \subseteq B_2 \subseteq B_1 \subseteq B_0.$$   Choose a set $C\in [\omega]^\omega$ such that $A_n \subseteq^* C \subseteq^* B_n$ for each $n\in \omega$. Additionally,  we can assume that sets $C \setminus A_n$ and $B_n\setminus C$ are infinite,  since there are no $\omega$-limits and $(\omega, \omega^*)$-gaps. Then, choose a set $D\in [\omega]^\omega$ such that $D \setminus C$ is infinite and  $C  \subseteq D \subseteq^* B_n$ for each $n\in \omega$.  
\end{proof}
 Occasionally segments show up in the descriptive set theory. For example, the work of G. Moran and D. Strauss \cite{ms} implies that any subset of $[\omega]^\omega$ having the  property of Baire
 and of second category contains a segment. In other words, it has the doughnut property. One can prove this                                                                           adopting the proof of Proposition 2.2 in \cite{hp}, also. The work \cite{ms} implies that any  subsets of $[\omega]^\omega$ with positive Lebesgue measure contains a segment, compare \cite{pv} and \cite{knw}.

\section{Segment topologies}

  C. Di Prisco and J. Henle \cite{hp} introduced so called  doughnut property. Namely, a subset $S\subseteq [\omega]^\omega$ has the doughnut property, whenever $S$ contains a segment or is disjoint with a segment.  Afterwards, Halbeisen \cite{hal} generalized this property, considering so called completely doughnut sets and completely doughnut null sets. We
feel that the use of "doughnut"  is not appropriate. We swap it onto notations similar to that, which were  used in  \cite{bre} or \cite{knw}. A     
    subset $S \subseteq [\omega]^\omega$ is called a \textit{$v$-set}, if for each segment $<A,B>$
there exists a segment $<C,D>\subseteq <A,B>$ such that $$ <C,D>\subseteq S \mbox{ or } <C,D> \cap S = \emptyset.$$
    If  always holds $  <C,D> \cap S = \emptyset$, then $S$ is called a \textit{$v^0$-set}.  
Any subset of a $v^0$-set is a   $v$-set and a $v^0$-set, too. Also, the complement of a $v$-set is a $v$-set. According to  facts 1.3, 1.5 and 1.6 in Halbeisen \cite{hal},  the family of all $v$-sets is a $\sigma$-field and we denote this field $(v)$. The family of all $v^0$-sets is a $\sigma$-ideal and we denote this ideal $(v^0)$. One can  find many interesting results about   $(v^0)$ in papers \cite{bre},  \cite{bhl} and \cite{knw}.   

We amplify the method of  Aniszczyk \cite{ani} and  Schilling \cite{sch} to introduce some topologies, which correspond to $(v)$. These topologies have  the same features as the pseudo topology, which was considered by  Halbeisen \cite{hal}. Fix  a  transfinite sequence $\{ C_\alpha: \alpha < \mathfrak{c}\}$ consisting   of all segments. Put $V_0 = C_0$.  For every ordinal number $\alpha < \mathfrak{c}$, let $M_\alpha$ be the union of all intersections $ C_{\beta_1} \cap C_{\beta_2} \cap \ldots \cap C_{\beta_n}$ such that  
$$ |C_{\beta_1} \cap C_{\beta_2} \cap \ldots \cap C_{\beta_n}|< \omega, $$ where   
 $ \beta_i \leq \alpha$ and  $1\leq i \leq n$.   Put $V_\alpha = C_\alpha \setminus M_\alpha$.  The topology generated  by all (just defined) sets $V_\alpha$ is called a \textit{segment topology}. There are many segment topologies, since any one depends on an ordering $\{ C_\alpha: \alpha < \mathfrak{c}\}$.  
 We get $|M_\alpha| < \mathfrak{c}$, for any $\alpha < \frak c$. Also, each $V_\alpha$ contains a segment. Therefore,  if $S\subset [\omega]^\omega$ and $|S|<\mathfrak{c}$, then  $S$ is nowhere dense with respect to  any segment topology.  Moreover, we have.
 \begin{lemma} Any family $\{ V_\alpha: \alpha < \mathfrak{c}\}$ is a $\pi$-base and subbase for the segment topology (which it  generates). 
  \end{lemma}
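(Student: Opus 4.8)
The plan is to treat the two assertions separately, since the subbase claim is essentially immediate while the $\pi$-base claim carries the real content. By definition the segment topology is the topology \emph{generated} by the family $\{V_\alpha : \alpha < \mathfrak{c}\}$, and a generating family is by construction a subbase: the finite intersections of its members form a base. Hence the work concentrates on showing that $\{V_\alpha : \alpha<\mathfrak{c}\}$ is a $\pi$-base, i.e. that every nonempty open set contains some $V_\delta$. Because the $V_\alpha$ form a subbase, every nonempty open set contains a nonempty basic set $V_{\alpha_1}\cap\cdots\cap V_{\alpha_n}$, so it suffices to prove that each such \emph{nonempty} finite intersection contains some $V_\delta$.

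The key computation I would carry out first is to rewrite such a finite intersection. Writing $\alpha=\max\{\alpha_1,\dots,\alpha_n\}$ and using that the sets $M_\beta$ increase with $\beta$ (a finite intersection indexed below $\beta$ is also indexed below any larger ordinal), one gets
$$ V_{\alpha_1}\cap\cdots\cap V_{\alpha_n} = (C_{\alpha_1}\cap\cdots\cap C_{\alpha_n})\setminus M_\alpha. $$
Then I would invoke the Section~2 fact that a finite intersection of segments is either finite or again a segment, and split into cases. If $C_{\alpha_1}\cap\cdots\cap C_{\alpha_n}$ is finite, then it is literally one of the finite intersections defining $M_\alpha$, hence contained in $M_\alpha$; the displayed identity then forces $V_{\alpha_1}\cap\cdots\cap V_{\alpha_n}=\emptyset$, contrary to nonemptiness. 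So for a nonempty basic set the intersection $C_{\alpha_1}\cap\cdots\cap C_{\alpha_n}$ must be a segment, and since $\{C_\alpha\}$ enumerates all segments it equals $C_\gamma$ for some $\gamma$; thus the basic set is exactly $C_\gamma\setminus M_\alpha$.

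Finally I would produce the desired $V_\delta$ inside $C_\gamma\setminus M_\alpha$. Here Fact~\ref{a1} supplies continuum many pairwise disjoint segments contained in $C_\gamma$; since $|M_\alpha|<\mathfrak{c}$ and the segments are pairwise disjoint, all but fewer than $\mathfrak{c}$ of them avoid $M_\alpha$, so I may choose one, say $C_\delta\subseteq C_\gamma$ with $C_\delta\cap M_\alpha=\emptyset$. Then $V_\delta=C_\delta\setminus M_\delta\subseteq C_\delta\subseteq C_\gamma$ and $V_\delta\cap M_\alpha=\emptyset$, whence $V_\delta\subseteq C_\gamma\setminus M_\alpha=V_{\alpha_1}\cap\cdots\cap V_{\alpha_n}$; moreover $V_\delta\neq\emptyset$ because each $V_\delta$ contains a segment. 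This settles the $\pi$-base verification.

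The step I expect to demand the most care (rather than cleverness) is the bookkeeping behind the displayed identity: correctly merging $M_{\alpha_1}\cup\cdots\cup M_{\alpha_n}$ into the single set $M_\alpha$ by monotonicity, and recognizing that a finite $C$-intersection is one of the very sets swept into $M_\alpha$. Once that identity is in hand, the finite-case contradiction and the disjoint-segment selection are routine applications of facts already recorded, and the subbase half is merely unwinding the definition of the generated topology.
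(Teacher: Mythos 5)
Your proposal is correct and follows essentially the same route as the paper's proof: the subbase claim is definitional, a nonempty basic set is identified as a segment minus a set of size less than $\mathfrak{c}$, and Fact~1 (continuum many disjoint subsegments) yields a segment $C_\delta$, hence a $V_\delta$, inside it. The only differences are expository --- you merge $M_{\alpha_1}\cup\cdots\cup M_{\alpha_n}$ into a single $M_{\max\alpha_i}$ by monotonicity and spell out why a finite $C$-intersection forces the basic set to be empty, details the paper leaves implicit.
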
 
 \begin{proof}   The family $\{ V_\alpha: \alpha < \mathfrak{c}\}$ is a subbase by the definition. Thus, the family of all intersections $ V_{\beta_1} \cap V_{\beta_2} \cap \ldots \cap V_{\beta_n}$ constitutes  a base. If a base set $ V_{\beta_1} \cap V_{\beta_2} \cap \ldots \cap V_{\beta_n}$ is non-empty, then it has the form of a segment minus a set of the cardinality less than the continuum, exactly   $$ C_{\beta_1} \cap C_{\beta_2} \cap \ldots \cap C_{\beta_n}\setminus ( M_{\beta_1} \cup M_{\beta_2} \cup \ldots \cup M_{\beta_n}).$$ By Fact \ref{a1}, it contains some segment $C_\alpha$. Hence  $ V_{\beta_1} \cap V_{\beta_2} \cap \ldots \cap V_{\beta_n}$ contains some $V_\alpha \subseteq C_\alpha$.
 \end{proof} 
 
 Immediately, one obtains that any two segment topologies determine the same family of nowhere dense sets. As a matter of fact, every element of the base contains a segment and vice versa. Consequently, the nowhere dense sets with respect to any segment topology are the $v^0$-sets.    The next lemma amplifies the fact that there are no  $(\omega, \omega^*)$-gaps. It corresponds to the  result of Moran and Strauss \cite{ms}, compare  Proposition 2.2 in \cite{hp}. We  need the following abbreviation   $$ <A, B>_n = <A, B \setminus( \{0,1, \ldots, n\} \setminus A)>. $$ 
  \begin{lemma}\label{lem3} Let $S_0,  S_1,  \ldots$ be a  sequence of nowhere dense subsets. For any segment $<A,B>$ there exists a segment $<E,F> \subseteq <A,B>$ such that $S_n \cap <E,F>=\emptyset $ for each $n\in \omega$. 
 \end{lemma}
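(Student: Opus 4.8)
The plan is to build the required segment $\langle E,F\rangle$ by a fusion. I would construct a decreasing sequence of segments $\langle A_0,B_0\rangle\supseteq\langle A_1,B_1\rangle\supseteq\cdots$ inside $\langle A,B\rangle$ with $\langle A_n,B_n\rangle\cap S_n=\emptyset$, and then set $E=\bigcup_n A_n$ and $F=\bigcap_n B_n$. Since the $A_n$ increase and the $B_n$ decrease, every $X$ with $E\subseteq X\subseteq F$ satisfies $A_n\subseteq X\subseteq B_n$, so $\langle E,F\rangle\subseteq\langle A_n,B_n\rangle$ and hence $\langle E,F\rangle\cap S_n=\emptyset$ for all $n$; likewise $\langle E,F\rangle\subseteq\langle A,B\rangle$. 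Thus the only thing that can go wrong is that $F\setminus E$ fails to be infinite, in which case $\langle E,F\rangle$ is not a segment. This is exactly the point that Fact \ref{BBB} does not settle, since it only places a segment inside each $\langle A_n,B_n\rangle^*$, and a finite modification of a point of $\langle A_n,B_n\rangle$ may well fall into $S_n$. So the whole difficulty is to carry out the shrinking while permanently keeping infinitely many integers free.

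To this end I would reserve, at stage $n$, a new integer $e_n\in B_n\setminus A_n$ with $e_n\notin\{e_0,\dots,e_{n-1}\}$, and demand that $e_0,\dots,e_n$ remain in $B_m\setminus A_m$ for every $m\ge n$. If this is maintained, then each $e_k$ lies in $B_m\setminus A_m$ for all $m$, so $\{e_0,e_1,\dots\}\subseteq F\setminus E$, whence $F\setminus E$ is infinite and $\langle E,F\rangle$ is a genuine segment. The induction step therefore reduces to the following amalgamation statement, which I expect to be the heart of the matter. \textbf{Claim.} Given a segment $\langle A,B\rangle$, a finite set $R\subseteq B\setminus A$, and a nowhere dense (equivalently, $v^0$-) set $S$, there is a subsegment $\langle C,D\rangle\subseteq\langle A,B\rangle$ with $R\subseteq D\setminus C$ and $\langle C,D\rangle\cap S=\emptyset$.

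To prove the Claim I would split $\langle A,B\rangle$ according to the trace on $R$: writing $G=(B\setminus A)\setminus R$, any segment in which $R$ is free and only integers of $G$ are decided has the form $\langle A\cup A',B\setminus B'\rangle$ with $A',B'\subseteq G$ disjoint, and it is the disjoint union, over $T\subseteq R$, of the slices $\langle A\cup T\cup A',(B\setminus(R\setminus T))\setminus B'\rangle$. I enumerate the finitely many subsets $T_1,\dots,T_{2^{|R|}}$ of $R$ and, processing them one at a time, successively enlarge a pair $(A',B')$ of disjoint subsets of $G$: at step $i$ I use nowhere density of $S$ to find a subsegment of the current $T_i$-slice disjoint from $S$, and I absorb the new decisions (which lie in $G$, hence leave $R$ free) into $(A',B')$. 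Because each chosen piece is a genuine segment, the remaining free part $G\setminus(A'\cup B')$ stays infinite at every step, and because the slices only shrink, the disjointness from $S$ secured for $T_i$ persists. At the end $\langle A\cup A',B\setminus B'\rangle$ meets no slice of $S$, so it is disjoint from $S$, it keeps $R$ free, and its width $R\cup(G\setminus(A'\cup B'))$ is infinite; this proves the Claim.

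Finally I would assemble the fusion. Start with $\langle A_0,B_0\rangle$ obtained by applying the Claim to $\langle A,B\rangle$, $R_0=\{e_0\}$ and $S_0$; at stage $n+1$ pick $e_{n+1}\in(B_n\setminus A_n)\setminus\{e_0,\dots,e_n\}$ and apply the Claim to $\langle A_n,B_n\rangle$, $R_{n+1}=\{e_0,\dots,e_{n+1}\}$ and $S_{n+1}$ to obtain $\langle A_{n+1},B_{n+1}\rangle$. This keeps the reserved integers free and arranges $\langle A_n,B_n\rangle\cap S_n=\emptyset$, so by the first paragraph $\langle E,F\rangle=\langle\bigcup_n A_n,\bigcap_n B_n\rangle$ is a segment contained in $\langle A,B\rangle$ and disjoint from every $S_n$. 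The main obstacle, as indicated, is the amalgamation Claim, namely avoiding $S$ while refusing to touch a prescribed finite set of free integers; everything else is bookkeeping about unions and intersections.
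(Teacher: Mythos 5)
Your proof is correct and follows essentially the same route as the paper's: you reserve the points $e_0,e_1,\dots$ and, at stage $n$, run a finite inner induction over all $2^{n+1}$ traces on $\{e_0,\dots,e_n\}$, successively shrinking the segment so that each slice misses $S_n$ while keeping the reserved points undecided. Your amalgamation Claim is precisely the paper's inner loop (its segments $<A^k_n\cup x_k,B^k_n>_{e_n}$), just packaged as a separate lemma.
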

 \begin{proof}  Assume that the sequence  $S_0,  S_1,  \ldots$ is increasing.   
 We shall define points $e_0, e_1, \ldots, e_n$ and sets $$A \subseteq A_0  \subseteq A_1 \subseteq \ldots \subseteq A_n \subseteq B_n \subseteq \ldots \subseteq B_1 \subseteq B_0 \subseteq B,$$ where $B_n \setminus A_n$ is infinite, 
$\{ e_0, e_1, \ldots, e_n\} \subset B_n\setminus A_n$ and   $$ e_n = \min ( B_n \setminus (A_n \cup \{ e_0, e_1, \ldots, e_{n-1}\});$$ and  such that $ <A_n		\cup x, B_n>_{e_n} \cap   S_n =\emptyset$, for each $x \subseteq \{e_0, e_1, \ldots, e_n\} $ and any $n< \omega$.   

We proceed inductively with respect to $n$.
 Let  $e_0 = \min (B\setminus A)$.  Choose a segment $<A^0_0,B^0_0> \subseteq <A,B>_{e_0}\setminus S_0.$ Then, choose sets $A_0 \supseteq A^0_0$ and $B_0 \subseteq B^0_0
\cup \{e_0\}$ such that $e_0 \in B_0\setminus A_0$ and the segment $<A_0\cup \{e_0\},B_0>_{e_0}$ is disjoint with $ S_0 $. We get $$ (<A_0\cup \{e_0\},B_0>_{e_0}\cup <A_0,B_0>_{e_0})\cap S_0 =\emptyset.$$ 
Assume that sets $A_n$ and $B_n$ are defined. Let $$ e_n = \min ( B_n \setminus (A_n \cup \{ e_0, e_1, \ldots, e_{n-1}\})).$$ Enumerate all subsets of $\{e_0,e_1, \ldots, e_n\}$ into  a  sequence $x_1, x_2, \ldots ,x_{2^{n+1}}$.    Choose a segment $$<A^1_n,B^1_n > \subseteq <A_n\cup x_1,B_n >_{e_n}\setminus S_n.$$ If a segment $<A^{k-1}_n,B^{k-1}_n >$ has been already defined,  then choose sets $A^k_n \supseteq A^{k-1}_n$ and $B^k_n \subseteq B^{k-1}_n
\cup \{e_0,e_1, \ldots, e_n\}$ such that $\{e_0, e_1, \ldots, e_n\}  \subset B^k_n\setminus A^k_n$ and the segment $<A^k_n\cup x_k,B^k_n>_{e_n}$ is disjoint with $ S_n $. Let $B_{n+1}$ be the last $B^k_n$ and $A_{n+1}$ be the last $A^k_n$. By the definition, we get $\{e_k: k <\omega \} \subset B_n\setminus A_n$ and $$ \cup\{ <A^k_n\cup x_k,B^k_n>_{e_n}: 0<  k \leq2^{n+1}\} \cap S_n =\emptyset,$$ for any $n<\omega$. 
 Finally, the segment $$ <E,F>=<\cup \{ A_n: n \in \omega\}, \cup \{ A_n: n \in \omega\} \cup \{e_n: n \in \omega\}>$$ is disjoint with each $ S_k$. Indeed, suppose $C\in <E,F>\cap S_k.$ Let $x= C \cap \{e_0, e_1, \ldots , e_k\}$. Then $C\in <A_k\cup x, B_k>_{e_k}$. But this contradicts   $  <A_k\cup x,B_k>_{e_k} \cap S_k =\emptyset.$   \end{proof}
 \begin{corollary}\label{pr3} For any segment topology, the intersection of countable many open and dense sets contains an open and dense subset. \hfill $\Box$
 \end{corollary}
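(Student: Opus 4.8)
The plan is to read the statement through complements and reduce it to Lemma~\ref{lem3}. Let $U_0, U_1, \ldots$ be open and dense in a fixed segment topology, and put $S_n = [\omega]^\omega \setminus U_n$. Since each $U_n$ is open, each $S_n$ is closed, and since each $U_n$ is dense, each $S_n$ has empty interior, i.e. $\int S_n = \emptyset$; thus every $S_n$ is nowhere dense. Consequently $\bigcap_n U_n = [\omega]^\omega \setminus \bigcup_n S_n$, and a set is contained in $\bigcap_n U_n$ exactly when it is disjoint from every $S_n$. This is precisely the situation handled by Lemma~\ref{lem3}.

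First I would exhibit the open set that is to be dense. Define
$$ W = \bigcup \{ V_\alpha : \alpha < \mathfrak{c} \mbox{ and } V_\alpha \cap S_n = \emptyset \mbox{ for every } n \} . $$
As a union of basic open sets, $W$ is open; and since each $V_\alpha$ occurring in the union is disjoint from $\bigcup_n S_n$, we have $W \subseteq \bigcap_n U_n$ by construction. It therefore remains only to check that $W$ is dense, and then $W$ is the desired open and dense subset of $\bigcap_n U_n$.

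The key step is the density of $W$. Let $O$ be any nonempty basic open set, say $O = V_{\beta_1} \cap \ldots \cap V_{\beta_k}$. By the preceding lemma (that the family $\{ V_\alpha : \alpha < \mathfrak{c}\}$ is a $\pi$-base), $O$ contains a segment $<A,B>$. Applying Lemma~\ref{lem3} to the sequence $S_0, S_1, \ldots$ inside $<A,B>$ yields a segment $<E,F> \subseteq <A,B>$ with $<E,F> \cap S_n = \emptyset$ for all $n$. Since the sequence $\{C_\alpha : \alpha < \mathfrak{c}\}$ lists all segments, we have $<E,F> = C_\gamma$ for some $\gamma$; then $V_\gamma = C_\gamma \setminus M_\gamma \subseteq <E,F>$, so $V_\gamma$ is disjoint from every $S_n$ and hence $V_\gamma \subseteq W$. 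Moreover $V_\gamma$ is nonempty (it contains a segment by Fact~\ref{a1}) and $V_\gamma \subseteq <E,F> \subseteq <A,B> \subseteq O$. Thus $W \cap O \neq \emptyset$, and since $O$ was an arbitrary nonempty basic open set, $W$ is dense.

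The main obstacle is not any single computation but getting the density argument to close cleanly: one must pass from the \emph{segment} $<E,F>$ supplied by Lemma~\ref{lem3} back to an honest \emph{basic open} subset of $\bigcap_n U_n$. This is exactly where I rely on two earlier facts in tandem — that the enumeration $\{C_\alpha : \alpha<\mathfrak{c}\}$ exhausts all segments (so $<E,F>$ itself is some $C_\gamma$), and that $V_\gamma = C_\gamma \setminus M_\gamma$ discards only a set of size $< \mathfrak{c}$ while still containing a segment — so that $V_\gamma$ is simultaneously nonempty, a member of the generating family, and small enough to sit inside the given open set $O$. Once this bridge is in place, the openness and the inclusion $W \subseteq \bigcap_n U_n$ are immediate, and the corollary follows.
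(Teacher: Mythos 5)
Your proof is correct and follows exactly the route the paper intends: the corollary is stated with no written proof because it is meant to be immediate from Lemma~\ref{lem3} applied to the (nowhere dense) complements of the given open dense sets, which is precisely what you carry out. Your extra care in passing from the segment $<E,F>$ back to a nonempty subbasic set $V_\gamma$ via the enumeration $\{C_\alpha\}$ and Fact~\ref{a1} is a faithful and complete spelling-out of that implicit argument.
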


 \begin{corollary}\label{CC2}  The  ideal $(v^0)$  coincides with the family of all  sets of the first category with respect to any segment topology. \hfill $\Box$
\end{corollary}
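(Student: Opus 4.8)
The plan is to prove the two inclusions separately, leaning on the identification of nowhere dense sets with $v^0$-sets that was recorded just before Corollary \ref{pr3}. Recall that a set is of the first category precisely when it is a countable union of nowhere dense sets, so the whole task reduces to relating $(v^0)$ to such countable unions.

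First I would dispose of the easy inclusion, that every member of $(v^0)$ is of the first category. Since each $v^0$-set is nowhere dense with respect to any segment topology, and a single nowhere dense set is by definition of the first category, the inclusion of $(v^0)$ into the family of first category sets follows at once, with no further work.

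The substantive direction is the reverse inclusion. Let $M = \bigcup_{n\in\omega} S_n$ be of the first category, with each $S_n$ nowhere dense, hence, by the identification, a $v^0$-set. Here I would invoke Lemma \ref{lem3}: for any segment $<A,B>$ there is a sub-segment $<E,F> \subseteq <A,B>$ with $S_n \cap <E,F> = \emptyset$ for every $n$, so that $<E,F> \cap M = \emptyset$. As this holds for every segment $<A,B>$, the set $M$ itself is a $v^0$-set. Combining the two inclusions yields the asserted coincidence. In effect, Lemma \ref{lem3} says that a countable union of nowhere dense sets is again nowhere dense, so that for these topologies ``first category'' collapses onto ``nowhere dense,'' which in turn is exactly $(v^0)$.

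I expect no genuine obstacle. The only point that needs care is the passage between ``segment'' and ``basic open set of the segment topology,'' but this is already settled by the $\pi$-base Lemma together with the remark that every base element contains a segment and conversely; consequently nowhere density tested on segments agrees with nowhere density tested on basic open sets, and the argument above is fully justified.
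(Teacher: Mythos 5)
Your argument is correct and is exactly the paper's intended reasoning: the authors leave Corollary \ref{CC2} with only a $\Box$ precisely because it follows immediately from the identification of $v^0$-sets with the nowhere dense sets and from Lemma \ref{lem3}, which shows a countable union of nowhere dense sets is again nowhere dense (hence a $v^0$-set). Your handling of the passage between segments and basic open sets via the $\pi$-base lemma is also the right justification, so there is nothing to add.
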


Recall that, a subset $Y$ of a topological space  $X$ has the  property of Baire whenever  $Y= (G\setminus F)\cup H$, where $G$ is open and   $F$, $H$ are of the first category.   If $X=[\omega]^\omega$ is equipped with a segment topology, then $Y\subseteq X$ has the Baire property (i.e. the  property of Baire with respect to this segment topology) whenever  $Y= G\cup H$, where $G$ is open  and  $H$ is a $v^0$-set. 

\begin{theorem}\label{CC1} The $\sigma$-field  $(v)$   coincides with the family of all sets which have the Baire property with respect to a segment topology.  
\end{theorem}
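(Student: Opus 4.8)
The plan is to prove the two inclusions separately, and the whole argument rests on one preliminary observation: there is a tight interleaving between segments and non-empty open sets of the segment topology. First I would record that \emph{every non-empty open set contains a segment, and every segment contains a non-empty open set}. The first half holds because a non-empty open set contains a non-empty basic set $V_{\beta_1}\cap\dots\cap V_{\beta_n}$, which by the $\pi$-base lemma contains some $V_\alpha$, and each $V_\alpha$ contains a segment; the second half is immediate, since the segment $C_\gamma$ contains the non-empty subbasic set $V_\gamma=C_\gamma\setminus M_\gamma$. I will also use the definition directly: $H$ is a $v^0$-set precisely when every segment contains a subsegment disjoint from $H$ (equivalently, $H$ is nowhere dense), and the reformulation recorded before the theorem, that $Y$ has the Baire property iff $Y=G\cup H$ with $G$ open and $H\in(v^0)$.

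For the inclusion $(v)\subseteq\{\text{Baire property}\}$, given a $v$-set $S$ I would take $G=\int S$ and $H=S\setminus G$, so that $S=G\cup H$ with $G$ open, and it only remains to check $H\in(v^0)$. Fix a segment $\langle A,B\rangle$. Since $S$ is a $v$-set there is a subsegment $\langle C,D\rangle\subseteq\langle A,B\rangle$ with $\langle C,D\rangle\cap S=\emptyset$ or $\langle C,D\rangle\subseteq S$. In the first case $\langle C,D\rangle$ is already disjoint from $H\subseteq S$. In the second case $\langle C,D\rangle$ contains a non-empty open $W$; then $W\subseteq S$ forces $W\subseteq\int S=G$, hence $W\cap H=\emptyset$, and any subsegment of $W$ works. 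Thus every segment has a subsegment disjoint from $H$, so $H$ is a $v^0$-set and $S$ has the Baire property.

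For the reverse inclusion, let $Y=G\cup H$ with $G$ open and $H\in(v^0)$, and fix a segment $\langle A,B\rangle$. Using that $H$ is a $v^0$-set I choose a subsegment $\langle C,D\rangle\subseteq\langle A,B\rangle$ with $\langle C,D\rangle\cap H=\emptyset$, and let $W\subseteq\langle C,D\rangle$ be a non-empty open set. I then split on whether $G$ meets $W$: if $G\cap W\neq\emptyset$, it is open and hence contains a segment $\subseteq G\subseteq Y$; if $G\cap W=\emptyset$, then $W\cap Y=W\cap H=\emptyset$ and any subsegment of $W$ is disjoint from $Y$. Either way $\langle A,B\rangle$ has a subsegment lying in $Y$ or missing $Y$, so $Y$ is a $v$-set.

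The argument is short once the interleaving is in place, and the only point demanding care---the step I expect to be the real, if modest, obstacle---is the bookkeeping that passes between ``contains or misses a segment'' and ``contains or misses a non-empty open set'', since segments need not be open and open sets need not be segments. Everything else is a direct unwinding of the definitions of $v$-set and $v^0$-set together with the reformulation of the Baire property, so I would keep the exposition at the level of these two case analyses rather than invoking any deeper combinatorial input.
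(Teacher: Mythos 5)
Your proof is correct and follows essentially the same route as the paper: both arguments rest on the interleaving of segments and non-empty open sets (each non-empty open set contains a segment via the $\pi$-base lemma, and each segment $C_\gamma$ contains the open set $V_\gamma$), the paper packaging the first inclusion as ``$U\cup W$ is open dense, so the remainder is nowhere dense'' where you instead verify directly that $S\setminus\int S$ is a $v^0$-set. The differences are only in bookkeeping, not in substance.
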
 
\begin{proof} 
Fix a segment topology and  a $v$-set $X$. Let $U=\cup \{V_\beta: V_\beta \subseteq X \} $ and  $W=\cup \{V_\beta: V_\beta \cap X =\emptyset\} $. The union $U\cup W$ is open and dense. Thus  $X= U \cup F$, where  $F\subseteq [\omega]^\omega \setminus (U \cup W)$ is nowhere dense.

 We shall show that any open set is a $v$-set. Suppose a set $X$ is open. Take an arbitrary segment $<A,B>$ and choose a subbase set $V_\alpha \subseteq <A,B>$. There exists $V_\beta \subseteq V_\alpha$ such that $V_\beta \subseteq X$ or  $V_\beta \subseteq \int ([\omega]^\omega \setminus X)$. Each segment  $<C,D> \subseteq V_\beta$ witnesses that $X$ is a $v$-set.
\end{proof} 

Every classical analytic set belongs to $(v)$. This is a counterpart  of Mathias-Silver theorem - compare (21.9) or (29.8) in \cite{ker} - which arises from Halbeisen's paper \cite{hal}. In fact,  one could conclude it similarly like in  the paper by Pawlikowski \cite{paw}.  This was noted   by  Brendle,  Halbeisen and  Löwe in \cite{bhl}. We obtain the counterpart directly, using Theorem \ref{CC2} and theorems (29.11), (29.13) in \cite{ker}.

\section{Base $v$-matrix}
 We shall adopt a proof of  Base Matrix Lemma -  see B. Balcar J. Pelant and P. Simon, compare \cite{bps} and \cite{bs}. 
 There are known some generalizations of this theorem for some partial orders, e.g. compare \cite{mac}. For completeness, we  prove our's version directly. 
If $<A,B>$ and $<C,D>$ are segments, then the intersection $<A,B>^* \cap <C,D>^*$ is countable or  has the  cardinality continuum. In the second case the intersection  is a $*$-\textit{segment}. \\ \indent Whenever $<A,B>^* \cap <C,D>^*$ is countable, then   $<A,B>^*$ and $<C,D>^*$ are called $*$-\textit{disjoint}. \\ \indent 
\begin{lemma} \label{AAA}  If $S$ is a $v^0$-set, then for any segment $<A,B>$ there exists a segment  $<C,D> \subseteq <A,B>$ such that $<C,D>^* \cap S^* = \emptyset$.
\end{lemma}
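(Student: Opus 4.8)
The plan is to reduce the statement to the claim that $S^*$ is itself a $v^0$-set and then to promote a \emph{segment} missing $S^*$ to a \emph{$*$-segment} missing $S^*$. First I would record two elementary observations. Writing $y$ for a finite subset of $\omega$ and $X\triangle y = (X\setminus y)\cup(y\setminus X)$, the definition of $S^*$ exhibits it as the countable union $S^* = \bigcup\{\,S_y : y\in[\omega]^{<\omega}\,\}$, where $S_y=\{X\triangle y : X\in S\}$. Moreover $S^*$ is saturated under almost-equality: if $Y\in S^*$ and $Y=^*Z$ (meaning $Y\subseteq^* Z\subseteq^* Y$), then $Z\in S^*$. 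These two facts are what make the whole argument run.

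The main step is to show that each finite modification $S_y$ is again a $v^0$-set. Given a segment $<A,B>$, I would first pass to a subsegment $<A',B'>\subseteq<A,B>$ that \emph{resolves} $y$, i.e.\ for which $y\cap(B'\setminus A')=\emptyset$; this is possible because $y$ is finite while $B\setminus A$ is infinite, so each element of $y\cap(B\setminus A)$ may be pushed into the lower set or expelled from the upper set. On $<A',B'>$ every member has the same trace on $y$, so $X\mapsto X\triangle y$ carries $<A',B'>$ bijectively onto a genuine segment $<A'',B''>$. Since $S$ is a $v^0$-set there is $<C'',D''>\subseteq<A'',B''>$ with $<C'',D''>\cap S=\emptyset$, and pulling back by the involution $\triangle y$ gives $<C',D'>\subseteq<A',B'>\subseteq<A,B>$ with $<C',D'>\cap S_y=\emptyset$. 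Hence $S_y\in(v^0)$. The sets $S_y$ thus form a countable family of nowhere dense sets, so Lemma \ref{lem3} provides, for the given $<A,B>$, a subsegment $<E,F>\subseteq<A,B>$ disjoint from every $S_y$, that is, $<E,F>\cap S^* = \emptyset$.

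It remains to upgrade this to the $*$-segment level, and I claim $<E,F>^*\cap S^* = \emptyset$, so that $<C,D>=<E,F>$ is the required segment. Suppose not, and choose $Y\in<E,F>^*\cap S^*$ together with $X\in S$ satisfying $X=^*Y$. Then $E\subseteq^* X\subseteq^* F$, and setting $y=(E\setminus X)\cup(X\setminus F)$ — a finite set — a direct check gives $E\subseteq X\triangle y\subseteq F$, i.e.\ $X\triangle y\in<E,F>$. But $X\triangle y\in S_y$, contradicting $<E,F>\cap S_y=\emptyset$. This closes the argument.

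I expect the main obstacle to be the first step, namely verifying cleanly that a finite symmetric difference preserves the ideal $(v^0)$: the subtlety is that $X\mapsto X\triangle y$ need not send an arbitrary segment to a segment, which is precisely why one must shrink to a subsegment on which $y$ has constant trace before the map becomes a segment-to-segment bijection. The remaining bookkeeping — the countable-union step through Lemma \ref{lem3} and the finite adjustment in the final upgrade — is routine once the saturation of $S^*$ under $=^*$ is kept in view.
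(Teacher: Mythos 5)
Your proof is correct and follows essentially the same route as the paper's (which simply notes that $S^*$ is a countable union of $v^0$-sets, hence a $v^0$-set, and that any segment disjoint from $S^*$ works); you have merely filled in the details the paper leaves implicit, namely that each finite modification $S_y$ is again a $v^0$-set and that $=^*$-saturation of $S^*$ upgrades a segment missing $S^*$ to a $*$-segment missing $S^*$.
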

\begin{proof}
 By the definition, $S^*$ is a countable union of elements of  $(v^0)$, hence  $S^* \in (v^0)$. Thus, any segment $<C,D> \subseteq <A,B>$ disjoint with  $S^*$ is a desired one.
\end{proof}
A family $\Pee$ of $*$-segments  is a $v$-\textit{partition}, whenever  any two distinct members of $\Pee$ are $*$-disjoint and $\Pee$ is maximal with respect to the inclusion. A collection of $v$-partitions is called $v$-\textit{matrix}. A $v$-partition $\Pee$ \textit{refines} a  $v$-partition $\Qee$ (briefly $\Pee\prec \Qee$), if for each $<A,B>^*\in \Pee$ there exists $<C,D>^*\in \Qee$ such that $<A,B >^*\subseteq <C,D>^*$.
 A $v$-matrix $\Hee$ is called \textit{shattering}, if for each $*$-segment $<A,B>^*$ there exists  $\Pee \in \Hee$ and $<A_1,B_1>^* ,<A_2,B_2>^* \in \Pee$ such that $<A_1,B_1>^* \cap <A,B>^*$ and $<A_2,B_2>^* \cap <A,B>^*$ are different $*$-segments. Denote by $\kappa (v)$ the least cardinality of a shattering $v$-matrix.  
\begin{lemma}\label{EEE} If  a  $v$-matrix $\Hee$ is of the cardinality less than $\kappa (v)$, then there exists a $v$-partition $\Pee$ which refines any $v$-partition $\Qee \in \Hee$.
\end{lemma}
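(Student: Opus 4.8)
The plan is to recast the statement in the language of the poset of $*$-segments ordered by $\subseteq^*$ and to identify $\kappa (v)$ with its distributivity. For a $v$-partition $\Qee$ put
$D_\Qee = \{ <A,B>^* : <A,B>^* \subseteq^* <C,D>^* \text{ for some } <C,D>^* \in \Qee \}$, the family of $*$-segments refining $\Qee$. First I would check that $D_\Qee$ is \emph{open}, i.e. closed downwards under $\subseteq^*$ (refining is inherited by smaller $*$-segments), and \emph{dense}: given any $*$-segment $<A,B>^*$, maximality of $\Qee$ forbids $<A,B>^*$ from being $*$-disjoint from every member, so some $<C,D>^* \in \Qee$ meets it in a $*$-segment $<A\cup C, B\cap D>^*$, and that intersection lies in $D_\Qee$ below $<A,B>^*$.

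The heart of the matter is the equivalence: a $*$-segment $<A,B>^*$ refines $\Qee$ if and only if $\Qee$ does not shatter it. The direction ``refines $\Rightarrow$ not shattered'' is immediate, since once $<A,B>^* \subseteq^* <C,D>^*$ the intersection with every other member of $\Qee$ is countable by $*$-disjointness, so only one member is met in a $*$-segment. For the converse I would argue by contradiction: if $<A,B>^*$ is not shattered then by maximality exactly one member $<C,D>^*$ meets it in a $*$-segment; were $<A,B>^* \not\subseteq^* <C,D>^*$, a short computation on the defining sets, splitting according to whether $C \not\subseteq^* A$ or $B \not\subseteq^* D$, produces a sub-$*$-segment of $<A,B>^*$ that is $*$-disjoint from $<C,D>^*$ and hence from all of $\Qee$, contradicting maximality. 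Consequently $\Hee$ is shattering exactly when $\bigcap_{\Qee \in \Hee} D_\Qee = \emptyset$, so $\kappa (v)$ is precisely the least number of sets $D_\Qee$ with empty intersection.

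Since $|\Hee| < \kappa (v)$, the matrix $\Hee$ is not shattering and $\bigcap_{\Qee} D_\Qee \neq \emptyset$; to obtain a whole $v$-partition I must upgrade this to density. Here I would use homogeneity: every $*$-segment $<A,B>^*$, ordered by $\subseteq^*$ on its sub-$*$-segments, is isomorphic to the whole poset through the identification of $B\setminus A$ with $\omega$ and $X$ with $A\cup X$, and this isomorphism carries $v$-partitions to $v$-partitions and preserves the value $\kappa (v)$. Thus for an arbitrary $*$-segment $<A,B>^*$ the restricted matrix $\{ E \cap <A,B>^* : E \in \Qee,\ E \cap <A,B>^* \text{ a } *\text{-segment},\ \Qee \in \Hee \}$ has cardinality $< \kappa (v)$, so it fails to be shattering below $<A,B>^*$, and by the equivalence of the previous paragraph there is a $*$-segment inside $<A,B>^*$ refining every $\Qee$. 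Hence $\bigcap_\Qee D_\Qee$ is dense.

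Finally I would let $\Pee$ be a maximal pairwise $*$-disjoint subfamily of $\bigcap_\Qee D_\Qee$, which exists by Zorn's lemma. Density guarantees that $\Pee$ is maximal among \emph{all} $*$-segments, for a $*$-segment $*$-disjoint from $\Pee$ would contain, by density, a refining sub-$*$-segment still $*$-disjoint from $\Pee$, contradicting maximality; thus $\Pee$ is a $v$-partition, while membership of each of its elements in every $D_\Qee$ gives $\Pee \prec \Qee$ for all $\Qee \in \Hee$. I expect the main obstacle to be the converse half of the equivalence in the second paragraph, namely the explicit production, from a non-refining $*$-segment, of a sub-$*$-segment $*$-disjoint from the whole partition, since this is the step that actually encodes maximality and forces the case analysis on the sets $A,B,C,D$; by comparison the homogeneity transfer of $\kappa (v)$ is routine, though it must be phrased carefully so that ``shattering below $<A,B>^*$'' matches the global notion.
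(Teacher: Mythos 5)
Your proof is correct and follows essentially the same route as the paper: relativize the matrix to an arbitrary segment, use the isomorphism of that segment with the whole poset to see that the relativized matrix of size $<\kappa(v)$ is not shattering there, extract a refining sub-$*$-segment, and finish by taking a maximal pairwise $*$-disjoint family of such $*$-segments. Your explicit equivalence ``a $*$-segment is not shattered by a $v$-partition iff it is contained in a single member of it'' is precisely the step the paper leaves implicit when it passes from ``$\Hee(A,B)$ is not shattering relative to $<A,B>^*$'' to choosing $<C,D>^*\subseteq<E,F>^*$ for every $\Pee\in\Hee$, so spelling it out (with the case analysis on $C\not\subseteq^*A$ versus $B\not\subseteq^*D$) is a welcome addition rather than a deviation.
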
 
\begin{proof} Fix a segment $<A,B>$. Let $\Hee(A,B)= \{ \Pee (A,B): \Pee \in \Hee\}$ be the relative  $v$-matrix such that each $\Pee (A,B)$ consists of all $*$-segments  $ <C,D>^*\cap <A,B>^*$, where $ <C,D>^* \in \Pee $. Any segment $<C,D>$
is isomorphic to  $[D\setminus C]^{\leqslant\omega}$ and  $[\omega]^{\leqslant\omega}$, hence $\Hee(A,B)$ is not shattering relative to $<A,B>^*$. Choose a segment $<C,D> \subseteq <A,B>$ such that   there exists $<E, F>^* \in \Pee$ with $ <C,D>^*\subseteq <E, F>^*$ for every $\Pee \in \Hee$.  
Any  $v$-partition $\Pee$ consisting of above defined $*$-segments \mbox{$<C,D>^*$} is a desired one.
\end{proof}

 Let $h$ be the height of the base matrix . See \cite{bps} and \cite{bs} for rudimentary properties of the cardinal number $h$.
\begin{theorem} \label{tco3} $\omega_1 \leq \kappa (v)\leq h $ and $\kappa (v)$ is a regular  cardinal number.
\end{theorem}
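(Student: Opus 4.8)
The plan is to prove three separate assertions: the lower bound $\omega_1\le\kappa(v)$, the upper bound $\kappa(v)\le h$, and the regularity of $\kappa(v)$. For the lower bound, I would argue that no countable $v$-matrix can be shattering. Suppose $\Hee=\{\Qee_n:n\in\omega\}$ were a shattering $v$-matrix of size $\omega$. Using Lemma \ref{AAA} and the structure of $*$-segments, I would build a decreasing sequence of segments $<A_0,B_0>\supseteq<A_1,B_1>\supseteq\cdots$ so that at stage $n$ the $*$-segment $<A_n,B_n>^*$ is contained in a single member of $\Qee_n$, hence is not split by $\Qee_n$. Fact \ref{BBB} then yields a single segment $<C,D>$ with $<C,D>\subseteq<A_n,B_n>^*$ for every $n$; its $*$-hull meets each $\Qee_n$ in (at most) one $*$-segment, so no $\Qee_n$ shatters $<C,D>^*$. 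This contradicts that $\Hee$ is shattering, giving $\kappa(v)\ge\omega_1$.

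For the upper bound $\kappa(v)\le h$, I would exploit the stated isomorphism: every segment $<A,B>$ is isomorphic to $[D\setminus C]^{\leqslant\omega}$ and hence to $([\omega]^\omega,\subseteq^*)$, and this isomorphism carries $*$-segments to the sets used in the ordinary base matrix on $[\omega]^\omega$. I would take a base matrix on $[\omega]^\omega$ of height $h$ — a collection of $h$ many maximal almost-disjoint families (partitions modulo finite), dense and refining — and transport it through the isomorphism to produce a $v$-matrix of cardinality $h$. The key point is to check that the base-matrix property (that the levels are collectively dense, so every element is refined) translates into the shattering property for the resulting $v$-matrix: density of the base matrix means every infinite set is split by some level, which is exactly what shattering demands for every $*$-segment. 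Thus a shattering $v$-matrix of size $h$ exists and $\kappa(v)\le h$.

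For regularity, I would argue by contradiction using Lemma \ref{EEE}, exactly as in the classical proof that $h$ is regular. Suppose $\kappa(v)$ is singular and write a shattering $v$-matrix $\Hee$ of size $\kappa(v)$ as an increasing union $\Hee=\bigcup_{\xi<\cf(\kappa(v))}\Hee_\xi$ with each $|\Hee_\xi|<\kappa(v)$. By Lemma \ref{EEE}, for each $\xi$ there is a single $v$-partition $\Pee_\xi$ refining every member of $\Hee_\xi$. Then $\{\Pee_\xi:\xi<\cf(\kappa(v))\}$ is a $v$-matrix of size $\cf(\kappa(v))<\kappa(v)$, and I would verify it is still shattering: any $*$-segment that some $\Qee\in\Hee$ splits lies in $\Hee_\xi$ for some $\xi$, and since $\Pee_\xi$ refines $\Qee$, the partition $\Pee_\xi$ splits that $*$-segment as well. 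This produces a shattering $v$-matrix strictly smaller than $\kappa(v)$, contradicting minimality, so $\kappa(v)$ must be regular.

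The main obstacle I expect is the upper bound. The lower bound and regularity are clean diagonalization and refinement arguments that parallel the classical $h$ facts almost verbatim once Fact \ref{BBB} and Lemma \ref{EEE} are in hand. By contrast, transporting a genuine base matrix through the isomorphism $<A,B>\cong([\omega]^\omega,\subseteq^*)$ requires care: one must confirm that \emph{maximal} $*$-disjoint families correspond to maximal almost-disjoint families, that refinement is preserved in both directions, and — most delicately — that the collective density of the base-matrix levels is equivalent to the shattering condition stated for $v$-matrices (which quantifies over \emph{every} $*$-segment, not just the whole space). Pinning down this dictionary precisely, rather than the combinatorics on either side, is where the real work lies.
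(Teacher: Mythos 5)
Your treatment of the lower bound and of regularity is sound and is essentially the paper's: the uncountability of $\kappa(v)$ is exactly the diagonalization through Fact \ref{BBB} that you describe (the paper compresses it to a citation of that fact), and the regularity argument via Lemma \ref{EEE} and a cofinal subfamily is the one the paper gives, modulo the small verification that a refining partition still splits whatever an earlier level splits.

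The genuine gap is in the upper bound $\kappa(v)\leq h$. The ``dictionary'' you propose to transport a base matrix through the isomorphism $<A,B>\cong([\omega]^\omega,\subseteq^*)$ does not exist in the form you need: that isomorphism identifies a \emph{single} segment with $[\omega]^\omega$, not the global structure of $*$-segments with the structure of infinite sets, and a $*$-segment $<C,D>^*$ is determined by the pair $(C,D)$, not by the set $D\setminus C$ alone. Many pairwise $*$-disjoint $*$-segments share the same difference set, so maximal $*$-disjoint families do \emph{not} correspond to maximal almost-disjoint families, and the shattering condition (which quantifies over all $*$-segments) is not literally the density of the base-matrix levels. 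You correctly identify this as ``where the real work lies,'' but you do not do that work, and the proposed correspondence, taken at face value, is false. The paper's route avoids the issue: for each level $\Hee_\alpha$ of a base matrix it chooses a $v$-partition $\Pee_\alpha$ all of whose members $<A,B>^*$ satisfy $B\setminus A\subseteq^* V$ for some $V\in\Hee_\alpha$ (such a $v$-partition exists because $\Hee_\alpha$ is maximal almost disjoint), and then argues by contradiction: if $h<\kappa(v)$, Lemma \ref{EEE} yields a common refinement of $\{\Pee_\alpha:\alpha<h\}$, and any member $<C,D>^*$ of that refinement produces a single infinite set $D\setminus C$ almost contained in one member of \emph{every} level $\Hee_\alpha$, which contradicts the shattering property of the base matrix. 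Note that this argument never needs to verify that $\{\Pee_\alpha:\alpha<h\}$ is itself shattering, which is precisely the step your plan leaves open. If you want a direct construction, you must choose the two witnessing members of $\Pee_\alpha$ by applying maximality to sub-$*$-segments of the given $<A,B>^*$ of the form $<A,A\cup((B\setminus A)\cap V_i)>^*$; that extra care is not optional.
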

\begin{proof}
Suppose $h < \kappa (v)$. Take a base matrix  $\{\Hee_\alpha: \alpha < h\}$  such as in  2.11 Base Matrix Lemma in \cite{bps}.   Let $\Pee_\alpha$ be a $v$-partition such that for any $<A,B>^* \in \Pee_\alpha $ there exists $V\in \Hee_\alpha$ with $B \setminus A \subseteq^* V$. The $v$-matrix $\{\Pee_\alpha: \alpha < h\}$ contradicts Lemma \ref{EEE}.

 Consider a shattering $v$-matrix $\Hee =\{ \Pee_\alpha: \alpha < \kappa (v) \} $. By Lemma \ref{EEE}, we can assume that $\alpha <\beta$ implies $\Pee_\beta \prec \Pee_\alpha$. Any cofinal family of $v$-partitions from $\Hee$ constitutes a shattering $v$-matrix. Hence $\kappa (v)$ has to be regular.  It is uncountable by Fact \ref{BBB}. 
\end{proof}

\begin{theorem} \label{12} There exists a $v$-matrix $\Hee =\{ \Pee_\alpha: \alpha < \kappa (v) \} $ which is well ordered by the inverse of
$\prec$. Moreover, for
each $*$-segment $<A,B>^*$  there is $ <C,D>^*\in \cup \Hee$ such that  $<C,D>^* \subseteq <A,B>^*$. 
\end{theorem}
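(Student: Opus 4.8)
The plan is to construct the desired $v$-matrix by transfinite recursion of length $\kappa(v)$, building a $\prec$-decreasing chain of $v$-partitions $\Pee_\alpha$ whose union is shattering and satisfies the density condition. First I would start with $\Pee_0$ being any $v$-partition. At a successor or limit stage $\alpha < \kappa(v)$, having built $\{\Pee_\xi : \xi < \alpha\}$, I observe that this is a $v$-matrix of cardinality $|\alpha| < \kappa(v)$, so Lemma \ref{EEE} applies and furnishes a $v$-partition $\Pee_\alpha$ that refines every $\Pee_\xi$ with $\xi < \alpha$. This immediately gives a family well ordered by the inverse of $\prec$, since $\alpha < \beta$ yields $\Pee_\beta \prec \Pee_\alpha$.

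The substance of the argument is verifying that the resulting $v$-matrix $\Hee = \{\Pee_\alpha : \alpha < \kappa(v)\}$ is shattering, which then simultaneously delivers the ``moreover'' clause. I would argue by contradiction: if $\Hee$ were not shattering, there would exist a $*$-segment $<A,B>^*$ that no $\Pee_\alpha$ shatters, meaning that for every $\alpha$ exactly one member of $\Pee_\alpha$ meets $<A,B>^*$ in a $*$-segment. But then the relative matrix $\Hee(A,B)$ of Lemma \ref{EEE} would fail to shatter relative to $<A,B>^*$ at every level, and using the isomorphism of any segment with $[\omega]^{\leqslant\omega}$ one can transport the whole construction down into $<A,B>$ and produce from $\Hee$ a shattering $v$-matrix of cardinality at most $\kappa(v)$ that avoids shattering $<A,B>^*$ --- contradicting minimality of $\kappa(v)$, since a shattering matrix must shatter every $*$-segment. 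More directly, since $\Hee$ is a $v$-matrix of cardinality $\kappa(v)$ built to be cofinally refining, any genuinely non-shattered $*$-segment would let Lemma \ref{EEE} (applied to the full $\Hee$ relativized to $<A,B>$, which is not shattering there) be invoked to extend the chain by one more refining partition, contradicting that the recursion already exhausts $\kappa(v)$ as the least shattering length.

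For the density ``moreover'' clause, given $<A,B>^*$, the shattering property applied at some level $\alpha$ means some $\Pee_\alpha$ splits $<A,B>^*$; refining further down the chain, the members of $\cup\Hee$ sitting inside $<A,B>^*$ include $*$-segments properly contained in it, and by Lemma \ref{AAA} together with Fact \ref{a1} one always finds a $<C,D>^* \in \cup\Hee$ with $<C,D>^* \subseteq <A,B>^*$. The main obstacle I anticipate is the shattering verification: one must rule out the existence of a ``thread'' $*$-segment threading through a single member of each $\Pee_\alpha$, and the cleanest route is to exploit the homogeneity furnished by the isomorphism $<C,D> \cong [\omega]^{\leqslant\omega}$ used in Lemma \ref{EEE}, relativizing the entire shattering notion to $<A,B>^*$ and appealing to the minimality defining $\kappa(v)$. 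Care is needed to ensure the relativized matrix genuinely has cardinality $< \kappa(v)$ or else directly contradicts the defining minimality, and this is where the regularity of $\kappa(v)$ from Theorem \ref{tco3} should be kept in reserve to handle the limit stages of the recursion.
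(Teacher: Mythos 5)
Your construction of the $\prec$-decreasing chain has a first, fixable, defect: taking at stage $\alpha$ merely \emph{some} common refinement of the earlier $\Pee_\xi$ via Lemma \ref{EEE}, starting from an arbitrary $\Pee_0$, does not make the resulting matrix shattering --- nothing prevents the recursion from returning the same partition at every stage, and a single $v$-partition never shatters its own members. You must diagonalize against a fixed shattering $v$-matrix $\{\Qee_\alpha:\alpha<\kappa(v)\}$ of minimal cardinality, demanding $\Pee_\alpha\prec\Qee_\alpha$ as well (this is what the paper's opening sentence, and the proof of Theorem \ref{tco3}, implicitly do). Your fallback contradiction argument does not work: being able to append one more refining partition to a chain of length $\kappa(v)$ contradicts nothing, and Lemma \ref{EEE} cannot be applied to the full $\Hee$ anyway, since its hypothesis is cardinality strictly below $\kappa(v)$.

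The serious gap is the ``moreover'' clause, which is the real content of the theorem and which you assert rather than prove. Shattering of $<A,B>^*$ only yields two members of some $\Pee_\alpha$ each meeting $<A,B>^*$ in a $*$-segment; it does \emph{not} yield a member of any $\Pee_\alpha$ contained in $<A,B>^*$, and neither Fact \ref{a1} nor Lemma \ref{AAA} (which concerns $v^0$-sets) bridges that distance --- ``refining further down the chain'' could forever produce members that straddle $<A,B>^*$ without ever lying inside it. The paper's proof is devoted precisely to this point: at each successor stage it forms the family $J^c(\Pee_\alpha)$ of $*$-segments meeting continuum many members of $\Pee_\alpha$, picks an injection $F:J^c(\Pee_\alpha)\to\Pee_\alpha$ with each $F(G)\cap G$ a $*$-segment, and arranges $\Pee_{\alpha+1}$ to refine a $v$-partition containing all the sets $F(G)\cap G$, so every $G\in J^c(\Pee_\alpha)$ acquires a member of $\Pee_{\alpha+1}$ inside it; then, given an arbitrary $<A,B>^*$, it uses shattering to grow a binary tree of $*$-segments $D^{i_0\ldots i_n}_{\alpha_n}\subseteq\; <A,B>^*$ lying in pairwise distinct members of the $\Pee_{\alpha_n}$, and Fact \ref{BBB} to push each of the continuum many branches into a distinct member of $\Pee_\gamma$ for $\gamma$ above $\sup_n\alpha_n$ (regularity of $\kappa(v)$ keeps $\gamma<\kappa(v)$), concluding that $<A,B>^*$ lands in some $J^c(\Pee_\gamma)$ and hence contains a member of $\Pee_{\gamma+1}$. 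Without this bookkeeping and the tree-plus-Fact-\ref{BBB} argument, the density claim is unsupported.
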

\begin{proof} Build  a shattering $v$-matrix $\Hee =\{ \Pee_\alpha: \alpha < \kappa (v) \} $ such that  $\alpha <\beta$ implies $\Pee_\beta \prec \Pee_\alpha$. 
Let $J^c(\Pee_\alpha)$ be the family of all $*$-segments $<A,B>^*$ for  which there are continuum many elements of $ \Pee_\alpha$ not $*$-disjoint with $<A,B>^*$. Let $F:J^c(\Pee_\alpha)\to \Pee_\alpha$ be a one-to-one function such that  $F(G)\cap G$ is a $*$-segment, for every $G\in J^c(\Pee_\alpha)$.  Choose a $v$-partition $$\Qee \supseteq \{  F(G)\cap G: G\in J^c(\Pee_\alpha)\}.$$ Having these, one can improve $\Hee$ to obtain  $\Pee_{\alpha +1} \prec \Qee$ and $\Pee_{\alpha +1} \prec \Pee_\alpha$. One obtains that, if $<A,B>^*\in J^c(\Pee_\alpha)$, then there is $<C,D>^*\in \Pee_{\alpha +1}$ with $<C,D>^*\subseteq <A,B>^*$.

 For each $*$-segment $<A,B>^*$ there exists $\alpha < \kappa (v)$ such that $<A,B>^*\in J^c(\Pee_\alpha)$. Indeed, fix a $*$-segment $<A,B>^*$. Let $B^0_{\alpha_0}$ and $B^1_{\alpha_0}$ be two different $*$-segments belonging to $\Pee_{\alpha_0}$ such that  $D^0_{\alpha_0}=<A,B>^*\cap B^0_{\alpha_0}$ and $D^1_{\alpha_0}=<A,B>^*\cap B^1_{\alpha_0}$ are $*$-segments. Thus, 
  $D^{i_0}_{\alpha_0}\subseteq <A,B>^*$ for $i_0 \in \{0,1\}$. Inductively, 
 let $B^{i_0i_1 \ldots i_{n-1} 0}_{\alpha_n}$ and 
$B^{i_0i_1 \ldots i_{n-1} 1}_{\alpha_n}$ be two different $*$-segments belonging to $\Pee_{\alpha_n}$ such that  $D^{i_0i_1 \ldots i_{n-1} 0}_{\alpha_n}=<A,B>^*\cap B^{i_0i_1 \ldots i_{n-1} 0}_{\alpha_n}$ and $D^{i_0i_1 \ldots i_{n-1} 1}_{\alpha_n}=<A,B>^*\cap B^{i_0i_1 \ldots i_{n-1} 1}_{\alpha_n}$ are $*$-segments. We get  
 $$D^{i_0i_1 \ldots i_{n}}_{\alpha_{n}}\subset D^{i_0i_1 \ldots i_{n-1}}_{ \alpha_{n-1}} \subset <A,B>^*.$$ Put $\beta =\sup \{\alpha_n: n\in \omega\}$. By the construction and  Fact \ref{BBB}, we get $<A,B>^*\in J^c(\Pee_{\beta +1})$. Therefore, for each $*$-segment $<A,B>^*$ there exists $\alpha < \kappa (v)$ and 
$<C,D>^* \in \Pee_\alpha$ such that  $<C,D>^*\subseteq <A,B>^*$
\end{proof}

 Let  $\{ \Pee_\alpha: \alpha < \kappa (v) \} $ be a $v$-matrix as in the Theorem \ref{12}. In general, any two members of the union $\cup \{\Pee_\alpha:  \alpha < \kappa (v)\}$ are $*$-disjoint or one is included in the other.
 One  could remove a set $M_C$ of cardinality less than $\frak c$ from each $*$-segment $C \in \cup \{\Pee_\alpha:  \alpha < \kappa (v)\}$   such that any two members of the family $$\Qee = \{C\setminus M_C: C \in \cup \{\Pee_\alpha:  \alpha < \kappa (v)\}\}$$ are disjoint or one is included in the other. Any $\Qee$ as above is called  \textit{a base $v$-matrix}. Thus, $\kappa(v)$ is the height of a base $v$-matrix.  The next theorem yields analogy to nowhere Ramsey sets, compare \cite{ple} p. 665.  
  \begin{theorem} The ideal $(v^0)$ coincides with the family of all nowhere dense subsets with respect to the topology generated by a base $v$-matrix. \end{theorem}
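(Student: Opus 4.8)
The plan is to identify nowhere density with the combinatorial condition $(\ast)$: \emph{for every member $Q$ of the base $v$-matrix $\Qee$ there is a member $Q'\in\Qee$ with $Q'\subseteq Q$ and $Q'\cap X=\emptyset$}, and then to match $(\ast)$ against the definition of a $v^0$-set. First I would record that, since any two members of $\Qee$ are disjoint or nested, the family $\Qee$ is closed under finite intersections and hence is a base for the topology it generates (together with the whole space, which is the only neighbourhood of a point lying outside $\cup\Qee$). Using the standard reformulation ``$X$ is nowhere dense iff every nonempty open set has a nonempty open subset disjoint from $X$'', together with the fact that each $Q\in\Qee$ is nonempty (by Fact \ref{a1} it even contains a segment), nowhere density of $X$ becomes \emph{equivalent} to $(\ast)$; the case of the open set $[\omega]^\omega$ is absorbed because any $Q'$ produced by $(\ast)$ is already a nonempty open set missing $X$.

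For the implication $X\in(v^0)\Rightarrow X$ nowhere dense I would verify $(\ast)$. Fix $Q=<A,B>^*\setminus M\in\Qee$, so $<A,B>^*\in\cup\Hee$. The crucial point is that $(v^0)$ is closed under the $*$-operation, so Lemma \ref{AAA} yields a segment $<E,F>\subseteq<A,B>$ with $<E,F>^*\cap X^*=\emptyset$, and hence $<E,F>^*\cap X=\emptyset$; it is essential to kill $X^*$ rather than merely $X$, because the witnessing member $Q'$ will be a whole $*$-segment (up to a small set) and could otherwise meet $X$ along the finite-change orbit. Then Theorem \ref{12} supplies $<C,D>^*\in\cup\Hee$ with $<C,D>^*\subseteq<E,F>^*\subseteq<A,B>^*$, and passing to the base $v$-matrix (whose inclusions copy the $*$-segment inclusions of $\cup\Hee$) gives a member $Q'=<C,D>^*\setminus M'\subseteq Q$ with $Q'\cap X\subseteq<C,D>^*\cap X=\emptyset$, which is exactly $(\ast)$.

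For the converse, assume $X$ is nowhere dense, equivalently $(\ast)$, and fix a segment $<A,B>$; I must produce a segment $<C,D>\subseteq<A,B>$ disjoint from $X$. Applying Theorem \ref{12} to $<A,B>^*$ and then $(\ast)$ yields a member $Q=<G,H>^*\setminus M\in\Qee$ with $<G,H>^*\subseteq<A,B>^*$ and $Q\cap X=\emptyset$. \textbf{Here lies the main obstacle}: $Q$ sits inside $<A,B>^*$, not inside the genuine segment $<A,B>$, so one cannot read off a subsegment of $<A,B>$ directly. I would resolve this by exploiting that the inclusion $<G,H>^*\subseteq<A,B>^*$ only forces $A\subseteq^*G$ and $H\subseteq^*B$, i.e.\ finite discrepancies: setting $C_0=A\cup(G\cap B)$ and $D_0=A\cup(B\cap H)$ one checks that $<C_0,D_0>$ is a genuine segment with $A\subseteq C_0\subseteq D_0\subseteq B$ and $<C_0,D_0>\subseteq<G,H>^*$. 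Finally, by Fact \ref{a1} the segment $<C_0,D_0>$ contains continuum many pairwise disjoint subsegments, so one of them, $<C,D>$, avoids the set $M$ of cardinality less than $\frak c$; then $<C,D>\subseteq<G,H>^*\setminus M=Q$ gives $<C,D>\cap X=\emptyset$ while $<C,D>\subseteq<A,B>$, whence $X\in(v^0)$.
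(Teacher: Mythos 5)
Your proof is correct and follows essentially the same route as the paper's: Lemma \ref{AAA} together with Theorem \ref{12} to place a member of $\Qee$ inside each $Q\in\Qee$ missing the $v^0$-set, and Theorem \ref{12} together with nowhere density for the converse. You are in fact more careful than the paper at the one delicate point --- the paper's final sentence takes a segment inside $V\subseteq\ <A,B>^*$ and declares it a witness for $<A,B>$, whereas you explicitly convert the $*$-segment inclusion back into a genuine subsegment $<C_0,D_0>\subseteq\ <A,B>$ and then use Fact \ref{a1} to dodge the exceptional set of size less than $\frak c$.
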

\begin{proof} Let $S\subseteq [\omega]^\omega$ be a $v^0$-set and $\Qee$ a base $v$-matrix. Any set $W\in \Qee$ is a $*$-segment minus a set of cardinality less than $\frak c$. By Fact  \ref{a1} and Lemma \ref{AAA},  there is a $*$-segment $<A,B>^*\subseteq W$ such that  $<A,B>^* \cap S = \emptyset$, for each $W\in \Qee$. By Theorem \ref{12} there exists a $*$-segment $V \in \cup \{\Pee_\alpha:  \alpha < \kappa (v)\}$ such that $V \subseteq <A,B>^*$.   Sets $V\setminus M_V \in \Qee $ witnesses  that $S$ is nowhere dense.

Let $S$ be a nowhere dense set. Take a segment $<A,B>$. Choose  a $*$-segment $W\in \cup \{\Pee_\alpha:  \alpha < \kappa (v)\}$ such that $W \subseteq <A,B>^*$. Then choose $V\in \Qee$ such that $V \subseteq W \setminus  S$. Any segment  $<C,D>\subseteq V$ witnesses that  $S$ is a $v^0$- set.
\end{proof}

In ZFC,  Hausdorff \cite{hau} proved that there exists a $(\omega_1,\omega_1^*)$-gap. This suggests that the height of a base $v$-matrix could be $\omega_1$. We do not know: $$\mbox{ Is it  consistent  that $\omega_1 \not= \kappa(v)$?} $$ Without loss of generality, one can add to the definition of a base $v$-matrix that $\Pee_\beta  \prec \Pee_\alpha$ means that for each $<C,D>^* \in \Pee_\beta$ there exists $<A,B>^*\in \Pee_\alpha$ such that  $<C,D> \subset <A,B>$ and sets $C\setminus A$, $B\setminus D$ are infinite. This yields that 
each maximal chain contained in a such base $v$-matrix produces a $(\kappa(v),\kappa(v)^*)$-gap or a $\kappa(v)$-limit.  We need $add (v^0) =
cov (v^0)$  to obtain a  base $v$-matrix  such that each maximal chain contained in it   produces a $(\kappa(v),\kappa(v)^*)$-gap, only.    So, we  consider additivity and covering numbers of the ideal $(v^0)$.
\section{Additivity and covering numbers}
Foreseeing a counterpart of  Plewik's result that the  additivity number of completely Ramsey sets equals to the covering number of Ramsey null sets - compare \cite{bs} p. 352 - 353 -  Halbeisen  set the following question at the end  of  \cite{hal}:  Does
  $$ add (v^0) =
cov (v^0)?$$
 The answer is obvious under the Continuum Hypothesis.  We add another consistent hypotheses which confirm this equality. 
\begin{lemma}\label{CCC} If $\Pee$ is a $v$-partition, then the complement of the union $\cup \Pee$ is a $v^0$-set. 
\end{lemma}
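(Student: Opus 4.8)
The plan is to verify directly that $S = [\omega]^\omega \setminus \cup \Pee$ meets the definition of a $v^0$-set. So I fix an arbitrary segment $<A,B>$ and look for a subsegment $<C,D> \subseteq <A,B>$ with $<C,D> \cap S = \emptyset$; since $S$ is the complement of $\cup \Pee$, this is the same as finding $<C,D> \subseteq <A,B>$ with $<C,D> \subseteq \cup \Pee$. The only hypothesis available is that $\Pee$ is a maximal family of pairwise $*$-disjoint $*$-segments, so the argument must route through maximality.

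First I would feed the $*$-segment $<A,B>^*$ to the maximality of $\Pee$. Either $<A,B>^* \in \Pee$, and then I put $<E,F>^* = <A,B>^*$; or $<A,B>^* \notin \Pee$, in which case $\Pee \cup \{<A,B>^*\}$ would contradict maximality unless $<A,B>^*$ fails to be $*$-disjoint from some member $<E,F>^* \in \Pee$. Either way I obtain $<E,F>^* \in \Pee$ for which $<A,B>^* \cap <E,F>^*$ is uncountable, hence by the dichotomy for intersections of $*$-segments recalled above it is a $*$-segment, namely $<A\cup E, B\cap F>^*$. Its being a $*$-segment records two finiteness facts that drive the rest of the proof: $(A\cup E)\setminus(B\cap F)$ is finite while $(B\cap F)\setminus(A\cup E)$ is infinite.

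The substantive step --- and the one I expect to be the main obstacle --- is to descend from this $*$-segment to an honest segment lying both inside $<A,B>$ and inside $<E,F>^*$; maximality only ever hands back $*$-segments, which are insensitive to finite changes, so one cannot simply read off honest endpoints. I would set
$$ D = (B \cap F) \cup (A \setminus F) \mbox{ and } C = A \cup (E \cap B).$$
The verification is then routine set-chasing driven by the two finiteness facts: one checks $A \subseteq C \subseteq D \subseteq B$ (so $<C,D> \subseteq <A,B>$), that $(B\cap F)\setminus(A\cup E) \subseteq D \setminus C$ so $D\setminus C$ is infinite and $<C,D>$ is a genuine segment, and that $E \subseteq^* C$ and $D \subseteq^* F$ --- the first because $E \setminus C = E \setminus B$ is finite, the second because $D \setminus F = A \setminus F$ is finite. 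The last two inclusions give $<C,D> \subseteq <E,F>^*$, since $C \subseteq X \subseteq D$ forces $E \subseteq^* X \subseteq^* F$. As $<E,F>^* \in \Pee$, I conclude $<C,D> \subseteq \cup \Pee$, hence $<C,D> \cap S = \emptyset$, which is exactly the witness required. The care lies entirely in absorbing the finitely many points of disagreement among $A,B,E,F$ correctly: pushing $A \setminus F$ into $D$ and cutting $E$ down to $E \cap B$ when forming $C$; everything beyond that is $\subseteq^*$ bookkeeping.
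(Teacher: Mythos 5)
Your proof is correct and follows essentially the same route as the paper's: invoke maximality of $\Pee$ to find a member $<E,F>^*$ whose intersection with $<A,B>^*$ is a $*$-segment, and conclude that $<A,B>$ contains a segment inside $\cup\Pee$. The paper's one-line argument stops at the $*$-segment $<A\cup E, B\cap F>^*$; you additionally carry out the finite adjustments needed to extract an honest segment $<C,D>\subseteq <A,B>$ lying in $<E,F>^*$, which is a detail the paper leaves implicit but exactly the right bookkeeping.
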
 
\begin{proof} Take a segment $<A,B>$. Since $\Pee$ is maximal, there exists $<C,D>^*\in \Pee$  such that $<A\cup C,B\cap D>^*$ is a $*$-segment contained in $\cup \Pee$. \end{proof}
\begin{lemma}\label{DDD} If $S\subseteq [\omega]^\omega$ is a $v^0$-set,  then there exists   a $v$-partition $\Pee$ such that $\cup \Pee\cap S = \emptyset$. 
\end{lemma}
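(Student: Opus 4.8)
The plan is to produce $\Pee$ as a maximal pairwise $*$-disjoint family of $*$-segments, each of which avoids $S$, and then to argue that such a maximal family is automatically a $v$-partition. The whole argument is powered by Lemma \ref{AAA}: inside any segment it furnishes a $*$-segment disjoint from $S^*$, hence disjoint from $S$ as well, since $X\subseteq^* X\subseteq^* X$ gives $S\subseteq S^*$.

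First I would handle existence. Let $\Fee$ be the collection of all families of pairwise $*$-disjoint $*$-segments $<C,D>^*$ with $<C,D>^* \cap S = \emptyset$, partially ordered by inclusion. This $\Fee$ is nonempty (the empty family lies in it, and Lemma \ref{AAA} applied to any segment already supplies one admissible $*$-segment), and the union of any chain in $\Fee$ is again pairwise $*$-disjoint with every member still disjoint from $S$. By Zorn's lemma, $\Fee$ has a maximal element $\Pee$. By construction $\cup\Pee \cap S = \emptyset$, so the task reduces to checking that $\Pee$ is a $v$-partition.

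The hard part will be upgrading the maximality of $\Pee$ inside $\Fee$ to maximality among \emph{all} pairwise $*$-disjoint families of $*$-segments, which is precisely the defining property of a $v$-partition. Suppose, toward a contradiction, that $\Pee$ fails to be maximal in this stronger sense; then some $*$-segment $<E,F>^*$ is $*$-disjoint from every member of $\Pee$. Applying Lemma \ref{AAA} to the segment $<E,F>$ produces a segment $<C,D> \subseteq <E,F>$ with $<C,D>^* \cap S^* = \emptyset$, whence $<C,D>^* \cap S = \emptyset$. Since $<C,D> \subseteq <E,F>$ forces $<C,D>^* \subseteq <E,F>^*$, and any $*$-segment contained in $<E,F>^*$ is $*$-disjoint from everything $<E,F>^*$ is $*$-disjoint from (a subset of a countable intersection stays countable), the $*$-segment $<C,D>^*$ is $*$-disjoint from every member of $\Pee$.

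Because a $*$-segment is never $*$-disjoint from itself, $<C,D>^*$ differs from every member of $\Pee$, so $\Pee \cup \{<C,D>^*\}$ is a pairwise $*$-disjoint family with all members avoiding $S$ that properly extends $\Pee$, contradicting the maximality of $\Pee$ in $\Fee$. Hence no such $<E,F>^*$ exists, $\Pee$ is maximal among pairwise $*$-disjoint families, and therefore $\Pee$ is a $v$-partition with $\cup\Pee \cap S = \emptyset$, as required.
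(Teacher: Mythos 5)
Your proposal is correct and follows essentially the same route as the paper: use Lemma \ref{AAA} (equivalently, that $S^*$ is again a $v^0$-set) to find, inside any given segment, a $*$-segment missing $S^*$, and then take a maximal pairwise $*$-disjoint family of such $*$-segments. The paper compresses the Zorn's lemma step and the verification that such a maximal family is genuinely a $v$-partition into the phrase ``any $v$-partition consisting of such $<C,D>^*$''; you have simply written out those details, and they check out.
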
 
\begin{proof} If $S$ is a $v^0$-set, then $S^*$ is a $v^0$-set, too. Thus, for any  segment $<A,B>$ there exists a segment $<C,D>\subseteq <A,B>$  such that $<C,D>^*\cap S^* =\emptyset $. Any  $v$-partition $\Pee$ consisting of a such $<C,D>^*$ is a desired one. \end{proof}

\begin{theorem} \label{15} $\kappa(v) = add(v^0)$.
\end{theorem}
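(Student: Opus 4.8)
The plan is to establish the two inequalities $\kappa (v)\le add(v^0)$ and $add(v^0)\le \kappa (v)$ separately, passing back and forth between $v^0$-sets and $v$-partitions by means of Lemmas \ref{CCC} and \ref{DDD}.

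For $\kappa (v)\le add(v^0)$ I would show that whenever $\{S_\xi :\xi <\lambda\}$ is a family of $v^0$-sets with $\lambda <\kappa (v)$, its union lies in $(v^0)$. First, by Lemma \ref{DDD} pick for each $\xi$ a $v$-partition $\Pee_\xi$ with $\cup \Pee_\xi \cap S_\xi =\emptyset$. Since $\lambda <\kappa (v)$, Lemma \ref{EEE} yields a single $v$-partition $\Pee$ refining every $\Pee_\xi$. Each member $<C,D>^*\in \Pee$ is then contained in one member of each $\Pee_\xi$, so $<C,D>^*\subseteq \cup \Pee_\xi$ and hence $<C,D>^*\cap S_\xi =\emptyset$ for all $\xi$; consequently $\cup \Pee$ is disjoint from $\bigcup_\xi S_\xi$. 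Finally, maximality of $\Pee$ produces, inside any prescribed segment $<A,B>$, a member of $\Pee$ and thus a subsegment of $<A,B>$ missing $\bigcup_\xi S_\xi$, so $\bigcup_\xi S_\xi$ is a $v^0$-set. As this works for every $\lambda <\kappa (v)$, we obtain $add(v^0)\ge \kappa (v)$.

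For the reverse inequality I would fix a shattering $v$-matrix $\Hee =\{\Pee_\alpha :\alpha <\kappa (v)\}$ of least cardinality, arranged by Theorem \ref{12} so that $\alpha <\beta$ implies $\Pee_\beta \prec \Pee_\alpha$, and set $T_\alpha =[\omega]^\omega \setminus \cup \Pee_\alpha$. By Lemma \ref{CCC} each $T_\alpha$ is a $v^0$-set and the family $\{T_\alpha\}$ is increasing. It then suffices to prove that $\bigcup_{\alpha <\kappa (v)}T_\alpha \notin (v^0)$, since this union would be a union of $\kappa (v)$ many $v^0$-sets lying outside $(v^0)$, giving $add(v^0)\le \kappa (v)$. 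Take an arbitrary $*$-segment $<A,B>^*$. Because $\Hee$ is shattering, some $\Pee_\beta$ splits it, i.e. two distinct members of $\Pee_\beta$ meet $<A,B>^*$ in two different $*$-segments. The decisive point is that a $*$-segment split in this way by a $v$-partition cannot be \emph{covered} by the members of that partition; equivalently, pairwise $*$-disjoint $*$-segments can cover a $*$-segment only if one of them already contains it. Granting this, $<A,B>^*\not\subseteq \cup \Pee_\beta$, so $<A,B>^*$ meets $T_\beta$. Since $\cup \Pee_\beta$, and hence $T_\beta$, is closed under finite modifications, the witnessing point may be replaced by one in the genuine segment, so every subsegment of every segment meets $\bigcup_\alpha T_\alpha$; therefore $\bigcup_\alpha T_\alpha$ is not a $v^0$-set.

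The hard part is exactly the indecomposability statement invoked in the last step: that a $*$-segment which meets two members of a $v$-partition in $*$-segments must contain a point escaping $\cup \Pee_\beta$. This is the $\subseteq^*$-counterpart of the connectedness of $([\omega]^\omega ,\subseteq^*)$ that underlies Fact \ref{BBB}, and it reflects the absence of $(\omega ,\omega^*)$-gaps; it is what prevents a split $*$-segment from being tiled without gaps. I expect this to be the only genuine obstacle, and I would isolate it as a separate combinatorial lemma about $*$-segments. One can reach the same crux via Lemma \ref{DDD} instead: assuming $\bigcup_\alpha T_\alpha \in (v^0)$ gives a $v$-partition $\Pee$ with $\cup \Pee \subseteq \bigcap_\alpha \cup \Pee_\alpha$, and then each member of $\Pee$ would be covered by, hence contained in a single member of, every $\Pee_\alpha$, contradicting shattering — confirming that the indecomposability fact is precisely what the argument turns on.
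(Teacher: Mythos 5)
Your first inequality, $add(v^0)\ge \kappa(v)$, is exactly the paper's argument (Lemma \ref{DDD} to get the partitions $\Pee_\xi$, Lemma \ref{EEE} to get a common refinement $\Pee$, then the complement of $\cup\Pee$ is a $v^0$-set containing the union). The problem is the second inequality: the ``indecomposability'' lemma on which you explicitly hang the whole argument --- that pairwise $*$-disjoint $*$-segments can cover a $*$-segment only if one of them already contains it, equivalently that a $v$-partition which splits $<A,B>^*$ cannot cover it --- is false.

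Here is a counterexample. Take a segment $<A,B>$ with $Z=\omega\setminus B$ infinite, and let $\mathcal Y$ be a transversal of the $=^*$-classes of elements of $<A,B>^*$. For $Y\in\mathcal Y$ the set $<Y,Y\cup Z>^*$ is a $*$-segment, and its intersection with $<A,B>^*$ is $\{X: Y\subseteq^* X\subseteq^* (Y\cup Z)\cap B =^* Y\}$, i.e.\ the countable class of $Y$; so these $*$-segments cover $<A,B>^*$ while none contains it. They are pairwise disjoint: a common element $X$ of $<Y,Y\cup Z>^*$ and $<Y',Y'\cup Z>^*$ would force $Y\cup Y'\subseteq^* (Y\cap Y')\cup Z$, hence $Y\,\triangle\, Y'\subseteq^* Z$, impossible for $Y\ne^* Y'$ since $Y\,\triangle\, Y'\subseteq^* B$. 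Moreover every sub-$*$-segment of $<A,B>^*$ meets each $<Y,Y\cup Z>^*$ in a subset of a single $=^*$-class, so the family extends to a $v$-partition containing two $*$-disjoint sub-$*$-segments of $<A,B>^*$: a $v$-partition that splits $<A,B>^*$ and covers it. This also kills the variant at the end of your proposal (a member of $\Pee$ covered by $\cup\Pee_\alpha$ need not lie inside a single member of $\Pee_\alpha$). The paper does not try to derive non-coverage from shattering: it arranges non-coverage by construction, choosing the base $v$-matrix so that each $C\in\Pee_\alpha$ has a point outside $\cup\Pee_{\alpha+1}$, whence every segment meets $\bigcup_\alpha\bigl([\omega]^\omega\setminus\cup\Pee_\alpha\bigr)$. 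To repair your proof you must build that extra property into $\Hee$ rather than prove it for an arbitrary shattering matrix.
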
  
\begin{proof} Consider a family $\Fee$ of $v^0$-sets such that $|\Fee| < \kappa(v)$. Using Lemma \ref{DDD}, fix a $v$-partition $\Pee_W$ such that $\cup \Pee_W \cap W =\emptyset$ for each $W \in \Fee$. Let $\Pee$ be a $v$-partition  refining any $\Pee_W$, which exists by Lemma \ref{EEE}. The $v^0$-set $[\omega]^\omega \setminus \cup \Pee$ contains $\cup \Fee$.

 Take a base $v$-matrix $\Qee=\{C\setminus M_C: C \in \cup \{\Pee_\alpha:  \alpha < \kappa (v)\}\}$.  Without loss of generality one can assume that for every $C\in \Pee_\alpha$ the difference $C\setminus \cup \Pee_{\alpha +1}$ is not empty.  Then, no segment is disjoint with the union of all sets $[\omega]^\omega \setminus \cup \Pee_\alpha$. In other words, this union is not a $v^0$-set.  Therefore, $\kappa(v) \geq add(v^0)$.
\end{proof}

There are $\sigma$-fields with additivity strictly less than  additivity of its natural $\sigma$-ideal. For example, consider a collection $\Fee$ of $\omega_1$ pairwise disjoint sets, each of the cardinality $\omega_2$.  Let $\See$ be the $\sigma$-field generated by $\Fee$ and all subsets of $\cup \Fee$ of cardinality at most $\omega_1$. Then $add(\See)=\omega_1$ and $add(\{X \in\See: |X| < \omega_2\})=\omega_2$. This is not a case for the field $(v)$.

\begin{theorem} \label{16} $add(v^0) = add(v)$.
\end{theorem}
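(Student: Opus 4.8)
The plan is to prove the two inequalities $add(v)\ge add(v^0)$ and $add(v)\le add(v^0)$ separately, and the whole argument rests on the decomposition established while proving Theorem \ref{CC1}: every $v$-set $X$ can be written as $X=U\cup H$, where $U$ is open in a fixed segment topology and $H$ is a $v^0$-set. I use freely that the open sets are closed under arbitrary unions, that the $v^0$-sets are exactly the nowhere dense (equivalently, first category) sets of the segment topology, and that by Theorem \ref{15} one may write $\kappa(v)=add(v^0)$.

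For $add(v)\ge add(v^0)$ I would take any family $\{X_\xi:\xi<\lambda\}$ of $v$-sets with $\lambda<add(v^0)$ and decompose each as $X_\xi=U_\xi\cup H_\xi$ as above. Then $\bigcup_\xi X_\xi=\bigl(\bigcup_\xi U_\xi\bigr)\cup\bigl(\bigcup_\xi H_\xi\bigr)$, where the first union is open and the second is a union of fewer than $add(v^0)$ many $v^0$-sets, hence again a $v^0$-set. Thus $\bigcup_\xi X_\xi$ is of the form open $\cup$ $v^0$, i.e. a $v$-set, and no family of size below $add(v^0)$ can leave $(v)$. This is precisely the statement that, contrary to the $\sigma$-field exhibited just before this theorem, the additivity of $(v)$ is not strictly smaller than the additivity of its ideal.

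For the reverse inequality I would exhibit a family of $\kappa:=add(v^0)$ many $v^0$-sets, which are automatically $v$-sets, whose union is not a $v$-set. Start from a witnessing family $\{H_\xi:\xi<\kappa\}$ for $add(v^0)$, so each $H_\xi\in(v^0)$ while $S:=\bigcup_\xi H_\xi\notin(v^0)$. Since $S$ is not a $v^0$-set, there is a segment $<A,B>$ below which $S$ is dense, i.e. every subsegment of $<A,B>$ meets $S$. Now split into two cases. If no subsegment of $<A,B>$ is contained in $S$, then $S$ is dense and codense below $<A,B>$, so neither alternative in the definition of a $v$-set can hold for $<A,B>$; hence $S$ is not a $v$-set and the family $\{H_\xi\}$ witnesses $add(v)\le\kappa$. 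If instead some subsegment $<C,D>\subseteq S$, then the relativized sets $H_\xi\cap<C,D>$ cover $<C,D>$; since $<C,D>$ is isomorphic to $[\omega]^\omega$ and this isomorphism preserves $(v^0)$, we obtain $cov(v^0)\le\kappa$, and as $add(v^0)\le cov(v^0)$ always holds, this forces $cov(v^0)=add(v^0)=\kappa$.

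Handling this last case is where I expect the main obstacle to lie: under $cov(v^0)=\kappa$ I must build directly a union of only $\kappa$ nowhere dense sets that is simultaneously dense and codense below a segment. The intended tool is a shattering base $v$-matrix $\{\Pee_\alpha:\alpha<\kappa\}$ as in Theorem \ref{12}, whose refinement relation organizes $\bigcup_\alpha\Pee_\alpha$ into a tree in which every $*$-segment lies below some node and every node is eventually split. Using Fact \ref{a1} together with an everywhere-splitting two-colouring of this tree, I would select at each level $\alpha$ a nowhere dense layer $N_\alpha$ so that $\bigcup_{\alpha<\kappa}N_\alpha$ meets every subsegment (density, furnished by the cover of size $cov(v^0)=\kappa$) yet contains none (codensity, guaranteed because the opposite colour is always present below). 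Such a union is a non-$v$ set assembled from $\kappa$ many $v^0$-sets, yielding $add(v)\le\kappa$. The delicate step, on which I would spend the most care, is arranging the colouring and the layers $N_\alpha$ so that each $N_\alpha$ genuinely remains nowhere dense while the total number of pieces does not exceed $\kappa$.
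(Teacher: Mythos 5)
Your first inequality, $add(v)\ge add(v^0)$, is correct and is exactly the paper's argument (decompose each $v$-set as open $\cup$ $v^0$ in a fixed segment topology and use that a union of fewer than $add(v^0)$ members of $(v^0)$ stays in $(v^0)$). The case analysis opening your second inequality is also sound: if the union $S$ of a witnessing family for $add(v^0)$ contains no subsegment of the bad segment $<A,B>$, then $S$ is already not a $v$-set; otherwise $S\supseteq <C,D>$ and, transporting along the isomorphism of $<C,D>$ with $[\omega]^\omega$, you get $cov(v^0)=add(v^0)=\kappa$.

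The genuine gap is in how you propose to finish that second case. You correctly identify that you need a union of $\kappa$ many nowhere dense sets that is dense and codense below a segment, but the ``everywhere-splitting two-colouring'' of the base $v$-matrix cannot obviously deliver codensity: to keep a point out of \emph{every} layer $N_\alpha$ you must arrange its exclusion coherently at all $\kappa$ levels simultaneously, i.e.\ you must find a point surviving along an entire maximal chain of the matrix --- and such chains can have empty intersection (indeed condition (9) of the ideal-type definition demands exactly that). The missing ingredient, which is what the paper uses, is a set $\Bee$ that is dense and codense in a segment topology, built by a transfinite recursion of length $\frak c$ over all segments as for a classical Bernstein set; this recursion over all $\frak c$ many segments, not the $\kappa$-level matrix, is what secures codensity. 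With $\Bee$ in hand your case closes immediately: the sets $\Bee\cap H_\xi\cap <C,D>$ are $v^0$-sets, there are $\kappa$ of them, and their union $\Bee\cap <C,D>$ meets every subsegment of $<C,D>$ but contains none, so it is not a $v$-set. (The paper runs this with the family $[\omega]^\omega\setminus\cup\Pee_\alpha$ coming from a base $v$-matrix instead of your covering family, but the role of $\Bee$ is identical.) So the architecture of your proof is viable, but as written the decisive construction is not carried out and the tool you reach for is the wrong one.
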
  
\begin{proof} Take a family $\Wee$  witnesses $add(v)$ and fix a segment topology. Each set $W\in \Wee$ is a $v$ set, hence has the form $W= V_W \cup H_W$ where $V_W$ is open and $H_W$ is a $v^0$-set. The union $\cup \{H_W: W \in \Wee \}$ witnesses $add(v^0)$. 

To prove the opposite inequality,  take a  set $\Bee\subseteq [\omega]^\omega$ which is dense and co-dense in a segment topology. One can construct $\Bee$ analogously to the classical construction of a Bernstein set. Let  $\Qee=\{C\setminus M_C: C \in \cup \{\Pee_\alpha:  \alpha < \kappa (v)\}\}$ be a base $v$-matrix . Then, the union of all sets $[\omega]^\omega \setminus \cup \Pee_\alpha$ is not a $v^0$-set. If also, it is  not a $v$-set, then it witnesses $\kappa(v) \geq add(v^0)$. But if this union is a $v$-set, then  sets  $\Bee \setminus \cup \Pee_\alpha$ constitute the family which witnesses $\kappa(v) \geq add(v^0)$.
\end{proof}
  
    Brendle observed that $cov(v^0) \leq r$, see  Lemma 3 in \cite{bre} at page 21. Therefore, we get the following.
\begin{theorem}\label{CCC} $\omega_1\leq \kappa (v) = add (v^0) = add(v) \leq cov(v^0) \leq \min \{\cf (\frak c ), r\}.$
\end{theorem}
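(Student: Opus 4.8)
The plan is to observe that the whole chain of (in)equalities splits into links already secured earlier in the paper together with two fresh upper bounds on $cov (v^0)$. The leftmost inequality $\omega_1 \leq \kappa (v)$ is part of Theorem \ref{tco3}; the equality $\kappa (v) = add (v^0)$ is Theorem \ref{15}; and $add (v^0) = add (v)$ is Theorem \ref{16}. Hence only two items remain to be argued: the middle inequality $add (v^0) \leq cov (v^0)$ and the rightmost bound $cov (v^0) \leq \min\{\cf (\frak c), r\}$. Assembling these with the cited links gives the statement; note that once $add (v^0) \leq cov (v^0)$ is known, the displayed $add (v) \leq cov (v^0)$ follows from $add (v) = add (v^0)$.

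For $add (v^0) \leq cov (v^0)$ I would invoke the generic fact that $add (\Iee) \leq cov (\Iee)$ for a $\sigma$-ideal $\Iee$, after checking the two side conditions built into the definitions of Section 1. First, $[\omega]^\omega \notin (v^0)$, because a $v^0$-set must be disjoint from a subsegment of every segment, whereas $[\omega]^\omega$ meets, indeed contains, every segment. Second, $\cup (v^0) = [\omega]^\omega$, since every singleton, more generally every subset of $[\omega]^\omega$ of cardinality less than $\frak c$, is nowhere dense with respect to a segment topology and hence a $v^0$-set by the identification of nowhere dense sets with $v^0$-sets. Granting these, any family $\Fee \subseteq (v^0)$ with $\cup \Fee = [\omega]^\omega$ automatically satisfies $\cup \Fee \notin (v^0)$, so every witness for $cov (v^0)$ is also a witness for $add (v^0)$, whence $add (v^0) \leq cov (v^0)$.

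For the rightmost bound, the part $cov (v^0) \leq r$ is exactly Brendle's Lemma 3 in \cite{bre}, quoted just before the statement. It remains to prove $cov (v^0) \leq \cf (\frak c)$. Here I would use that $|[\omega]^\omega| = \frak c$ together with the fact, recalled above, that every subset of $[\omega]^\omega$ of cardinality less than $\frak c$ is a $v^0$-set. Fixing an increasing cofinal sequence $\{\kappa_\alpha : \alpha < \cf (\frak c)\}$ of ordinals below $\frak c$ and an enumeration of $[\omega]^\omega$ in order type $\frak c$, one writes $[\omega]^\omega = \cup \{S_\alpha : \alpha < \cf (\frak c)\}$ as an increasing union of initial pieces with each $|S_\alpha| < \frak c$; each $S_\alpha$ is then a $v^0$-set, so this is a covering family of size $\cf (\frak c)$. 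Combining the two bounds yields $cov (v^0) \leq \min\{\cf (\frak c), r\}$.

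No step here is genuinely difficult: the theorem is largely a consolidation of Theorems \ref{tco3}, \ref{15} and \ref{16}. The only points demanding care are the verification of the two side conditions, namely $[\omega]^\omega \notin (v^0)$ and $\cup (v^0) = [\omega]^\omega$, that license the generic inequality $add \leq cov$, and the routine bookkeeping of the cofinality decomposition. I expect the former to be the subtler of the two, since it is the one place where the definition of a $v^0$-set must be used directly rather than merely cited.
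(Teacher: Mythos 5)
Your proposal is correct and follows essentially the same route as the paper: the chain is assembled from Theorems \ref{tco3}, \ref{15}, \ref{16}, Brendle's bound $cov(v^0)\leq r$, and the decomposition of $[\omega]^\omega$ into $\cf(\frak c)$ many pieces of cardinality less than $\frak c$, each a $v^0$-set. You merely make explicit two points the paper leaves tacit, namely the generic inequality $add\leq cov$ (with the verification that $[\omega]^\omega\notin(v^0)$) and the fact that sets of size less than $\frak c$ are $v^0$-sets, both of which are consistent with what is established in Section 3.
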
 \begin{proof}
Suppose $[\omega]^\omega = \cup\{\Aee_\alpha: \alpha < cf(\frak c)\},$ where always $|\Aee_\alpha| <\frak c.$ So, $ cov(v^0) \leq cf(\frak c)$, since  each $\Aee_\alpha$ is a $v^0$-set.  Theorems \ref{tco3}, \ref{15}, \ref{16} and Brendle's observation imply the rest inequalities. 
  \end{proof}
 Immediately, we infer the following:  
  If $\kappa (v)= \min \{\cf (\frak c ),  r\} $, then $$\kappa (v)= add (v) =
cov (v^0)= add (v^0). $$ But, if $\kappa (v) < t$, then   
 there are no $\kappa$-limits, see \cite{rot}, and  for any base $v$-matrix $\Qee=\{C\setminus M_C: C \in \cup \{\Pee_\alpha:  \alpha < \kappa (v)\}\}$ the intersection $\cap  \{\cup \Pee_\alpha:  \alpha < \kappa (v)\}$
is empty. This yields $add(v)=cov(v^0)$. Therefore, $t=\min \{\cf (\frak c ), r\}$ implies $add(v)=cov(v^0)$, too. 
\section{Ideal type of $(v^0)$} 
The notion   of an ideal type $(\lambda, \tau, \gamma)$ was introduced in \cite{ple}, where it was obtained some consistent isomorphisms,  applying  the ideal type $(\frak{c}, h, \frak{c})$ to  families of Ramsey null sets. Recall the notion  of ideal types at two steps. To present it in a organized manner we enumerate conditions which are used in  the definition. 

 Firstly,  we   adapt  Base Matrix Lemma \cite{bs}. 
Suppose $\Iee$ is a proper ideal on $\cup \Iee$. A collection of families $\Hee = \{\Pee_\alpha: \alpha < \kappa (\Iee)\}$ is called a \textit{base $\Iee$-matrix} whenever: \\ \indent (1)  
Each family $\Pee_\alpha$ consists of pairwise disjoint subsets of $\cup \Iee$; \\ \indent (2)
 If $\beta <\alpha$, then $\Pee_\alpha$ refines $\Pee_\beta$; \\ \indent (3)
Always $\cup \Iee \setminus \cup\Pee_\alpha $  belongs to $\Iee$; \\ \indent (4) 
  $\Iee$ is the ideal of nowhere dense sets with respect to  the topology generated by $\cup \Hee$.
 
  Secondly,  we   prepare the notions for applications with    Ramsey null sets and $v^0$-sets. The ideal $\Iee$ has the ideal type $(\lambda, \kappa (\Iee), \gamma)$ whenever there exists a base $\Iee$-matrix $\Hee= \{\Pee_\alpha: \alpha < \kappa (\Iee)\}$ such that:  \\ \indent (5)
 Each $\Pee_\alpha$ has the cardinality $\lambda$;\\ \indent (6) If $\beta <\alpha$ and $X\in \Pee_\beta$, then $X\setminus \cup \Pee_\alpha$ has the cardinality $\gamma$;\\ \indent (7) If $\beta <\alpha$ and $Y\in \Pee_\beta$, then $Y$ contains $\lambda$ many members of $\Pee_\alpha$;\\ \indent (8) There are no short maximal chains in $\cup \Hee$, i.e. if $\Cee \subseteq \cup \Hee$ is a maximal chain, then $\Cee\cap \Pee_\alpha$ is  nonempty for each $\alpha < \kappa(\Iee)$; \\ \indent (9) The intersection $\cap\{ \cup\Pee_\alpha: \alpha<\kappa (\Iee)\}$ is empty.

To describe  the ideal type of $(v^0)$ we have to assume that $cov (v^0) = \omega_1$. We do not know: $$ \mbox{ Is it  consistent  that $\omega_1 \not= cov(v^0)$? } $$ If $\omega_1= \min \{\cf (\frak c ), r\}$, then Theorem \ref{CCC} yields 
$\omega_1 = cov(v^0)$. 
\begin{theorem} If $\omega_1 = cov(v^0)$, then $(v^0)$ has the ideal type $(\frak c, \omega_1, \frak c)$.  
\end{theorem}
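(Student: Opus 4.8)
The plan is to realise the type by building a single base $v$-matrix $\{\Pee_\alpha:\alpha<\omega_1\}$ that simultaneously meets the bookkeeping conditions (5)--(9). First I would record that the hypothesis forces $\kappa(v)=\omega_1$: the inequalities $\omega_1\le\kappa(v)=add(v^0)\le cov(v^0)$ established above, together with $cov(v^0)=\omega_1$, give $\kappa(v)=add(v^0)=\omega_1$, so the index set of a base $v$-matrix is exactly $\omega_1$, which is the middle coordinate $\kappa(\Iee)$ of the type. Next I would fix, using $cov(v^0)=\omega_1$, a family $\{S_\alpha:\alpha<\omega_1\}$ of $v^0$-sets with $\bigcup_{\alpha<\omega_1}S_\alpha=[\omega]^\omega$; this covering family is the engine that will make coordinate (9) hold.

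The main construction is a transfinite recursion on $\alpha<\omega_1$ producing $v$-partitions $\Pee_\alpha$, well ordered by the inverse of $\prec$ and shattering, exactly as in Theorem~\ref{12}, but with two extra demands. At a successor step I would first apply Lemma~\ref{DDD} to $S_\alpha$ to get a $v$-partition $\Ree_\alpha$ with $\bigcup\Ree_\alpha\cap S_\alpha=\emptyset$, and then, since $\{\Pee_\beta:\beta\le\alpha\}\cup\{\Ree_\alpha\}$ has cardinality $<\omega_1=\kappa(v)$, invoke Lemma~\ref{EEE} to choose $\Pee_{\alpha+1}$ refining all of them; limit stages are handled by Lemma~\ref{EEE} alone. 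Whenever I refine inside a piece $X\in\Pee_\beta$ I would use Fact~\ref{a1} to split the underlying $*$-segment into continuum many $*$-disjoint $*$-segments, keeping only those contained in a proper sub-$*$-segment of $X$; this yields (5) each level has cardinality $\mathfrak{c}$ (there are at most and, by the splitting, at least $\mathfrak{c}$ pairwise disjoint pieces), (7) each $X$ splits into $\mathfrak{c}$ many members of the next level, and (6) $X\setminus\bigcup\Pee_{\alpha}$ retains $\mathfrak{c}$ many points. Finally I would pass to a genuine base $v$-matrix $\Qee_\alpha=\{C\setminus M_C:C\in\Pee_\alpha\}$ by deleting, as in the paragraph following Theorem~\ref{12}, a set $M_C$ of size $<\mathfrak{c}$ from each piece so that any two members become disjoint or nested; conditions (1)--(3) then hold by construction and the lemma that the complement of a $v$-partition union is a $v^0$-set, while (4) is the content of the theorem identifying $(v^0)$ with the nowhere dense sets of the base-$v$-matrix topology.

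It remains to verify (8) and (9). For (8) I would argue that no maximal chain $\Cee\subseteq\bigcup\Qee_\alpha$ is short: since members are pairwise disjoint or nested, if $\Cee$ has a member above level $\alpha$ then, by the refinement (2) and maximality, it already contains the unique member of $\Qee_\alpha$ lying above it; and if $\Cee$ reached only bounded levels, its minimum would split by (7) into smaller pieces comparable with every member of $\Cee$, contradicting maximality. Hence every maximal chain meets every level. Condition (9) is where the hypothesis does its real work and is the step I expect to be the \emph{main obstacle}: because $\Pee_{\alpha+1}$ refines $\Ree_\alpha$ we have $\bigcup\Qee_{\alpha+1}\subseteq\bigcup\Ree_\alpha$, so $S_\alpha\cap\bigcup\Qee_{\alpha+1}=\emptyset$, i.e. $S_\alpha\subseteq[\omega]^\omega\setminus\bigcup\Qee_{\alpha+1}$; as the unions $\bigcup\Qee_\gamma$ decrease, their complements increase, whence $\bigcup_{\gamma<\omega_1}\bigl([\omega]^\omega\setminus\bigcup\Qee_\gamma\bigr)\supseteq\bigcup_{\alpha<\omega_1}S_\alpha=[\omega]^\omega$, which is exactly $\bigcap_{\gamma<\omega_1}\bigcup\Qee_\gamma=\emptyset$. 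The delicate part throughout is to keep the absorption of the \emph{whole} covering family compatible with the cardinality demands (5)--(7) and with the shattering and density requirements of Theorem~\ref{12}, so that the object produced is at once a base $v$-matrix and a witness of the type $(\mathfrak{c},\omega_1,\mathfrak{c})$.
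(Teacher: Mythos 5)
Your proposal is correct and follows essentially the same route as the paper: the paper likewise starts from a base $v$-matrix of height $\kappa(v)=\omega_1$ and ``inductively changes'' it using a covering family witnessing $cov(v^0)=\omega_1$, which is exactly your interleaving of Lemma~\ref{DDD} and Lemma~\ref{EEE} to force $\bigcap_\alpha\bigcup\Pee_\alpha=\emptyset$. Your write-up simply makes explicit the bookkeeping for conditions (5)--(9) that the paper leaves implicit.
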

\begin{proof} Let $\Hee= \{\Pee_\alpha: \alpha < \omega_1\}$ be a base $v$-matrix.  Since $\omega_1 = cov(v^0)$ one can inductively change $\Hee$ such that   $\cap\{ \cup\Pee_\alpha: \alpha<\kappa (v)= \omega_1\} = \emptyset$. If one considers families $\Pee_\alpha$ for limit ordinals, the one obtains a base $v$-matrix which witnesses that $(v^0)$ has the ideal type $(\frak c, \omega_1, \frak c)$. 
\end{proof}

Thus, by \cite{ple} Theorem 2, if $h = \omega_1 = cov (v^0)$, then the ideal $(v^0)$  is isomorphic  with the ideal of all Ramsey null sets. This isomorphism clarify resemblances between definitions of completely Ramsey sets and $v$-sets. However, the $\sigma$-field  $(v)$  and the $\sigma$-field  of all completely Ramsey sets are different. Some Ramsey null sets can be no $v$-sets,   e.g. any intersection of a segment with a  set  which is dense and co-dense in a segment topology. Conversely, some $v^0$-sets  can be no completely Ramsey sets. Indeed, if $\Hee$ is a base matrix, see \cite{bps}, then $(\cup \Hee)^*$ is not a completely Ramsey set and one can check that $(\cup \Hee)^*$ is a $v^0$-set, compare  Brendle \cite{bre}.

\textbf{Acknowledgment}. 
We want to express our gratitude to the referee  for his or her valuable suggestions and  helpful comments.

\end{document}